\def\ps@pprintTitle{%
 \let\@oddhead\@empty
 \let\@evenhead\@empty
 \def\@oddfoot{}%
 \let\@evenfoot\@oddfoot}
\let\oldforall\forall
\let\forall\undefined
\DeclareMathOperator{\forall}{\oldforall}
\newtheorem{theorem}{Theorem}
\newtheorem{prop}{Proposition}
\newtheorem{lemma}[theorem]{Lemma}
\theoremstyle{remark}
\def\BState{\State\hskip-\ALG@thistlm}
\begin{document}

\begin{frontmatter}

%% Title, authors and addresses

%% use the tnoteref command within \title for footnotes;
%% use the tnotetext command for theassociated footnote;
%% use the fnref command within \author or \address for footnotes;
%% use the fntext command for theassociated footnote;
%% use the corref command within \author for corresponding author footnotes;
%% use the cortext command for theassociated footnote;
%% use the ead command for the email address,
%% and the form \ead[url] for the home page:
%% \title{Title\tnoteref{label1}}
%% \tnotetext[label1]{}
%% \author{Name\corref{cor1}\fnref{label2}}
%% \ead{email address}
%% \ead[url]{home page}
%% \fntext[label2]{}
%% \cortext[cor1]{}
%% \address{Address\fnref{label3}}
%% \fntext[label3]{}

%\title{Optimal Blocking and Station Design for Long Urban Railway Trains}

%\title{Joint Optimization of Train Block Design and Timetable Plan for Extra-Long Metro Trains}

% another possible title, possibly more direct
\title{Dynamic Random Bipartite Matching under Spatiotemporal Heterogeneity: General Models and Application to Mobility Services}

%% use optional labels to link authors explicitly to addresses:
%% \author[label1,label2]{}
%% \address[label1]{}
%% \address[label2]{}

\author[label1]{Shiyu Shen}
\author[label1]{Yanfeng Ouyang}

\address[label1]{Department of Civil and Environmental Engineering, University of Illinois at Urbana-Champaign, Urbana, IL 61801, USA}

\begin{abstract}
This paper explores a variant of bipartite matching problem, referred to as the Spatiotemporal Random Bipartite Matching Problem (ST-RBMP), that accommodates randomness and heterogeneity in the spatial distributions and temporal arrivals of bipartite vertices. This type of problem can be applied to many location-based services, such as shared mobility systems, where randomly arriving customers and vehicles must be matched dynamically. 
This paper proposes a new modeling framework to address ST-RBMP's challenges associated with the spatiotemporal heterogeneity, dynamics, and stochastic decision-making. 
The objective is to dynamically determine the optimal vehicle/customer pooling intervals and maximum matching radii that minimize the system-wide matching costs, including customer and vehicle waiting times and matching distances. 
Closed-form formulas for estimating the expected matching distances under a maximum matching radius are developed for static and homogeneous RBMPs, and then extended to accommodate spatial heterogeneity via continuum approximation. The ST-RBMP is then formulated as an optimal control problem where optimal values of pooling intervals and matching radii are solved over time and space. 
A series of experiments with simulated data are conducted to demonstrate that the proposed formulas for static RBMPs under matching radius and spatial heterogeneity yield very accurate results on estimating matching probabilities and distances. Additional numerical results are presented to demonstrate the effectiveness of the proposed ST-RBMP modeling framework in designing dynamic matching strategies for mobility services under various demand and supply patterns, which offers key managerial insights for mobility service operators.
\end{abstract}

%%Graphical abstract
%\begin{graphicalabstract}
%\includegraphics{grabs}
%\end{graphicalabstract}

%%Research highlights
%\begin{highlights}
%\item Research highlight 1
%\item Research highlight 2
%\end{highlights}

\begin{keyword}
%% keywords here, in the form: keyword \sep keyword
Bipartite matching \sep 
Random \sep 
Spatiotemporal \sep 
Heterogeneity\sep 
Optimal control
%% PACS codes here, in the form: \PACS code \sep code

%% MSC codes here, in the form: \MSC code \sep code
%% or \MSC[2008] code \sep code (2000 is the default)

\end{keyword}

\end{frontmatter}

%\linenumbers
% main text
\section{%Section Title Capitalizes All ``Major'' Words
Introduction}

The bipartite matching problem is fundamental in the field of applied mathematics and combinatorial optimization. The classic problem considers a bipartite graph with two disjoint subsets of vertices, and the objective is to find an optimal subset of edges that match the vertices into disjoint pairs. In the past decade, the online bipartite matching problem, a dynamic variation of the classic problem, has received significant attention.
This is driven by advances in enabling information and communication technologies \citep{mehta_online_2013}, as well as a wide variety of application contexts, such as interactions between users/information in social media \citep{wu_graph_2022}, e-commerce \citep{zhou_bipartite_2007}, and crowd-sourcing services \citep{zha_economic_2016}. 
Unlike the classic problem where all vertices and edges are static, in the online problem, one or both subset(s) of the vertices arrive dynamically. % in the online problem. 
Upon arrival of each vertex, a decision will be made on whether to match it with an available vertex from the other subset or leave it unmatched for future opportunities. 
Many strategies and algorithms have been developed to solve these problems, including approximation algorithms \citep{feng_batching_2020, shanks_online_2022, shanks_approximation_2023}, dynamic programming approaches \citep{psaraftis_dynamic_2016}, and meta-heuristics \citep{najmi_novel_2017}. 

This paper explores a variation of the online bipartite matching problem that further incorporates randomness in the spatiotemporal distributions of the vertices, which is referred to as the Spatiotemporal Random Bipartite Matching Problem (ST-RBMP).
This problem features two distinct assumptions: (i) the vertices in the bipartite graph are randomly distributed in space, and the edge weights between vertices are measured by a spatial metric; 
(ii) the vertices in both subsets are revealed dynamically over time according to certain processes.
Without loss of generality, we refer to the vertices in the smaller subset as ``demand" vertices, those in the larger subset as ``supply" vertices, and break ties arbitrarily.
This type of problem directly builds upon the spatiotemporal information of the vertices, and can be applied in many contexts; e.g., matching customers with vehicles for shared mobility services \citep{shen_dynamic_2023}, assigning patients to healthcare providers \citep{rao_surf_2020}, %distributing tasks to workers \citep{cheng_prediction-based_2017},
and distributing customers or tasks to a set of servers \citep{afeche_ride-hailing_2018}. 

% The challenges
The randomness of vertex distributions blurs the structure of effective matching strategies, particularly when the distributions are heterogeneous. The associated challenges are twofold. First, the spatial heterogeneity indicates that different neighborhoods have varying levels of demand and supply, as commonly observed in real-world mobility systems \citep{yang_modeling_2017}, which raises questions about how to 
%tailor the 
balance between matches that are within vs. across different neighborhoods. 
%prioritize matching of vertices across different neighborhoods and whether tailored strategies are needed for specific neighborhoods. 
For example, the optimal matching strategies in densely populated city centers may differ from those in sparsely populated suburban areas. 
Second, the temporal dynamics of supply and demand arrivals/departures force that matching decisions be continuously adapted to the evolving system states.
Improper decision-making in such dynamic and stochastic systems could lead to undesirable consequences. 
For instance, shared mobility systems often suffer from the so-called wild goose chase (WGC) phenomenon \citep{arnott_taxi_1996, daganzo_public_2010,castillo_surge_2017}, where a large number of vehicles are trapped in long unproductive deadheading from their locations to customer origins. 
This inefficient situation significantly compromises resource utilization and system performance.  If not properly managed, the system can remain in such an unfavorable state for a significant amount of time \citep{ouyang_measurement_2023}.
Many believe that low-quality vehicle-customer matching (e.g., due to instantaneous one-to-many matching) is the main cause of WGC, and to enhance system performance, a variety of dynamic routing and dispatching strategies have been proposed, including path-based vehicle rerouting \citep{lei_path-based_2019, shen_path-based_2021}, empty vehicle repositioning \citep{ke_system_2021}, dynamic vehicle swapping \citep{ouyang_measurement_2023, shen_dynamic_2023}, and optimization-based re-assignment \citep{maciejewski_assignment-based_2016, alonso-mora_-demand_2017, hyland_dynamic_2018}.

In addition to these tactical-level strategies and algorithms, it is critical to develop a systematic approach to determine the hyper-parameters of ST-RBMP that control the implementation of the matching process. %, such as the optimal timing for matches, and candidate demand or supply vertices that may be considered for matching. 
%Many t\
Transportation researchers have proposed the possibility of imposing (i) a supply-demand pooling interval to control optimal timing for matches, and (ii) a maximum matching radius to screen candidate demand or supply vertices that may be considered for matching (e.g., see 
\citet{yang_optimizing_2020}).
In Figure \ref{fig:radius and pooling},  demand and supply vertices (represented by the square and cross markers) arrive dynamically, and matching decisions are made at a sequence of decision epochs. 
The cumulated demand and supply vertices, including those new arrivals after the most recent matching epoch and those ``leftovers" from all previous matchings, form a new matching problem instance. A longer pooling interval between matching epochs could potentially include more vertices and reduce the average matching distance, but it also increases the expected waiting-for-match time for all these vertices.
At each decision epoch, the maximum matching radius dictates that a demand vertex can be considered for matching with supply vertices within that radius. 
A larger maximum radius can increase the number of successful matches at a single epoch but may result in longer average matching distances, and vice versa. 
\begin{figure}[ht] 
    \centering        
    \includegraphics[width=0.80\textwidth]{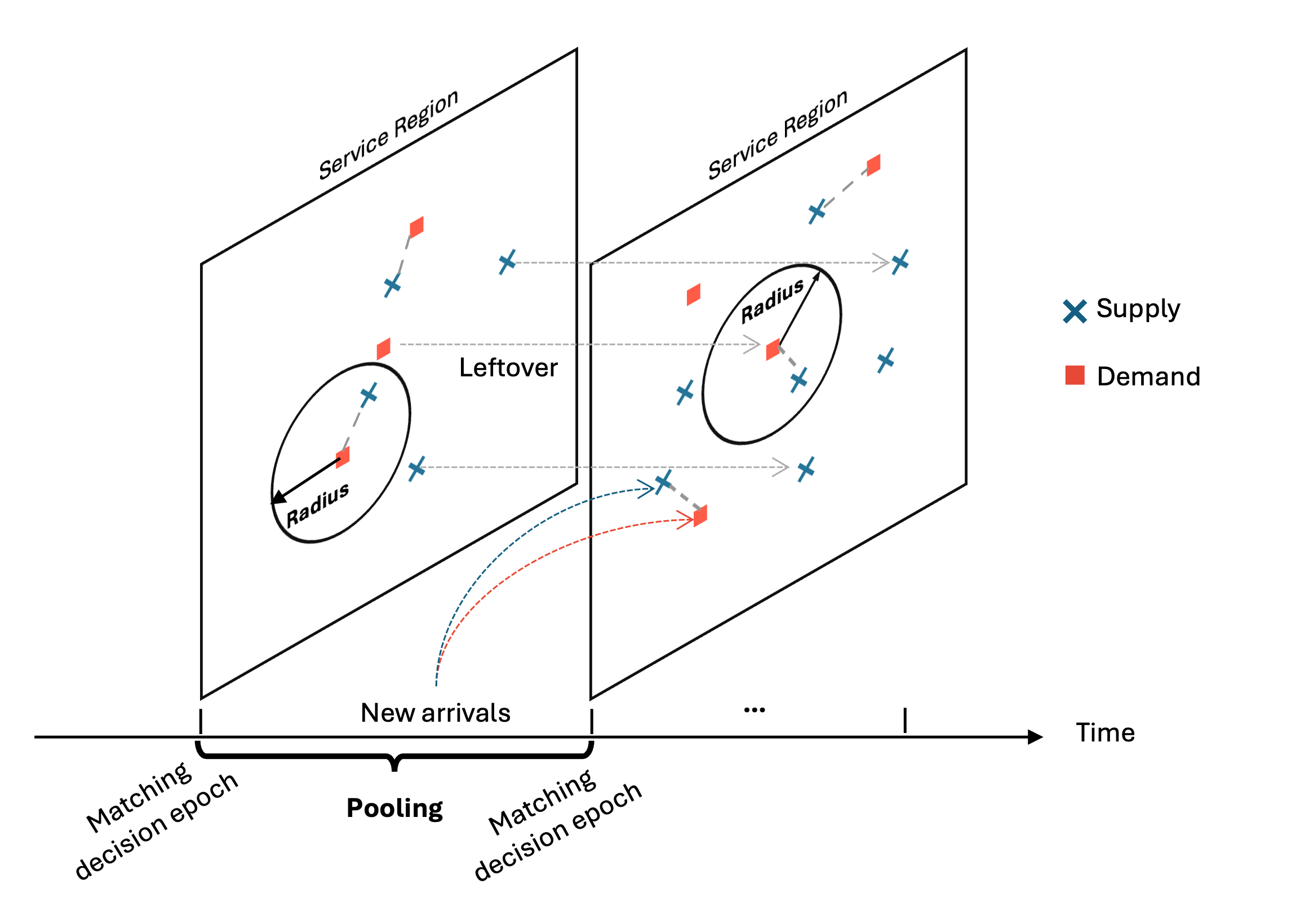}
    \caption{Matching Radius and Pooling Interval.}
    \label{fig:radius and pooling}
\end{figure}

Efforts have been made to optimize pooling intervals and maximum matching radii under specific problem settings. 
When considering a single decision epoch or assuming the system is at an equilibrium state, the problem reduces to a static RBMP. 
In a static RBMP with homogeneous vertex distributions, analytical models have been developed to analyze the impacts or determine the optimal values of either the matching radius \citep{xu_supply_2020}, the pooling interval \citep{shen_zhai_ouyang_2024}, or both \citep{yang_optimizing_2020}, based on estimates of the expected matching distance between matched vertices. However, no studies have extended the analysis to develop analytical models for problems with heterogeneous vertex distributions; %, where the is significantly more challenging. 
%For ST-RBMP under a heterogeneous setting
rather, some studies have adopted data-driven approaches to learn the optimal matching policies from historical data, such as \citet{qin_optimizing_2021} and \citet{liang_enhancing_2023}. 
While these learning methods may capture spatial and temporal heterogeneity from real-world systems, they require extensive data input, impose heavy computational burden, and face challenges related to transferability and robustness across problem settings. In many cases, insights from analytical models are preferable as they provide more concise and interpretable results, as well as ease to draw managerial insights.

To address all these challenges, this paper first proposes new closed-form formulas, as building blocks, for estimating the expected matching probability and distance in static RBMPs under maximum matching radii and/or spatially heterogeneous vertex distributions. This is achieved by revealing a desirable scaling property of homogeneous RBMPs: when the numbers of demand and supply vertices are not (nearly) balanced, the expected matching distance becomes largely independent of the size of the matching region but rather depends primarily on local vertex densities. A series of Monte Carlo simulations are conducted to verify this scaling property and demonstrate that the proposed formulas provide highly accurate estimates across a wide range of problem settings.

Building on the analytical formulas for static RBMPs, we next propose a time- and space-dependent control framework which dynamically determines the optimal pooling intervals and matching radii for ST-RBMP. The optimal control problem is formulated within a continuum approximation scheme, and the optimal values of the control variables over time and space are solved from local optimality conditions. %established by Pontryagin's Minimum Principle (PMP).
A set of numerical experiments is conducted to demonstrate the effectiveness of the proposed ST-RBMP modeling framework in designing dynamic matching strategies for mobility services. 
The results show that the framework not only effectively captures and addresses spatiotemporal heterogeneity in demand and supply distributions, but also provides a theoretical explanation on the impacts of matching radius and pooling interval under various service scenarios. 
%The effectiveness of the proposed modeling framework is verifyd through a series of numerical experiments.

Finally, both analytical and numerical results in this paper provide valuable managerial insights for mobility service operators. 
For example, in a closed-loop system with a fixed fleet size (with balanced customer and vehicle %demand and supply 
arrival rates), it may already be optimal to use instantaneous matching (i.e., no pooling) without imposing a matching radius. In contrast, for an open-loop system, when vehicles are expected to arrive at a higher rate than customers in the future, pooling customers and vehicles (through delayed matching) and imposing a dynamically adjusted spatial-dependent matching radii (i.e., be ``picky" on matches) can sometimes be beneficial.%, and the matching radii should be dynamically adjusted as the system evolves.

The remainder of this paper is organized as follows. Section \ref{sec:RBMP_static} focuses on static and homogeneous RBMPs and presents the scaling property, and the matching probability and distance formulas under maximum matching radius.
Section \ref{sec:RBMP_spatial_heter} then extends these matching probability and distance formulas to static and heterogeneous RBMPs. Section \ref{sec:RBMP_dynamic} formulates the ST-RBMP as a dynamic optimal control problem using the analytical results from Section \ref{sec:RBMP_spatial_heter}; a solution approach is also proposed.
Section \ref{sec:numerical} presents numerical experiments to verify the effectiveness of the proposed formulas and modeling framework. Finally, Section \ref{sec:conclusion} concludes the paper and outlines several directions for future research.

%\subsection{%Subsection capitalizes only the first word
%Expected Bipartite Matching Distance}

%\begin{table}[h]
%    \centering
%    \caption{An example of a table.}
%    \label{tab1}  % Place \label AFTER the caption
%    \begin{tabular}{l l l}
%        \hline
%        Column heading & Column A & Column B \\
%        \hline
%        Entry & 1 & 2 \\
%        Another entry & 3 & 4 \\
%        \hline
%    \end{tabular}
%\end{table}

\section{Static Homogeneous RBMPs}% under Spatial Heterogeneity}
\label{sec:RBMP_static}
In this section, we employ the recent formulas for static and homogeneous RBMPs \citep{shen_zhai_ouyang_2024} to account for two problem extensions. We begin by showing a scaling property of the expected matching distance with respect to the size of the spatial region. 
Then, we develop new formulas for estimating the matching probability and expected matching distance under a maximum allowable matching radius. 
These analytical results lays a theoretical foundation for deriving approximate formulas for both matching probability and expected matching distance of a heterogeneous RBMP based on local vertex densities and a locally imposed maximum matching radius. 

\subsection{Existing results}
A static and homogeneous RBMP is formally defined as follows. Consider a realization of $m$ demand vertices and $n$ supply vertices randomly and uniformly distributed within a given region, where the cost (or weight) of a match is measured by the distance between the vertices. Without loss of generality, we assume $n \ge m$. In each realized instance of the problem, every demand vertex is matched to exactly one supply vertex. The problem seeks an optimal set of matches that minimizes the total distance across all matched pair of vertices. % will be identified. 
The RBMP is defined over all possible realizations of such instances, and the objective is to estimate the distribution and moments of the optimal matching distance per demand vertex, denoted by a random variable $X$. 

\citet{shen_zhai_ouyang_2024} developed analytical models that yield closed-form approximate formulas for RBMPs %to address this problem 
within a ``unit-volume" hyper-ball in a $D$-dimensional L$^p$ space. The key steps involve:
\begin{itemize}
    \item[(i)] deriving the probability that a randomly selected demand vertex is matched to its $k$-th nearest supply vertex, denoted by $\mathbb{P}(k)$; and 
    \item[(ii)] deriving the distribution of the conditional expected distance from a randomly selected demand vertex to its $k$-th nearest neighbor, denoted by $X_k$.
\end{itemize}
According to \citet{shen_zhai_ouyang_2024}, %when $D=2$ or $D = 3$, 
the cumulative distribution function (CDF) and the $M$-th moment of the optimal matching distance $X$, denoted by $F_X(x)$ and $\mathbb{E}[X^M]$, respectively, can be approximately estimated by the following formulas:
\begin{align}
    F_X(x) &\approx \sum_{k=1}^m \mathbb{P}(k)\cdot F_{X_k}(x)
    = \sum_{k=1}^m \mathbb{P}(k)\cdot I_{(\frac{x}{R})^D}(k,n-k+1), 
    \label{eq:F_X_general}\\
    \mathbb{E}[X^M] 
    &\approx 
    \sum_{k=1}^m \mathbb{P}(k)\cdot\mathbb{E}[X_k^M] 
    %= R^M\cdot\frac{\Gamma(n+1)}{\Gamma(n+1+\frac{M}{D})}\sum_{k=1}^m \mathbb{P}(k)\cdot\frac{\Gamma(k+\frac{M}{D})}{\Gamma(k)}      
    \label{eq:E_X_general}, 
\end{align}
where $\mathbb{P}(k)$ and $\mathbb{E}[X_k^M]$ are respectively given by: 
\begin{align}
    \mathbb{P}(k) = \frac{1}{m}\left[ \left(\frac{k-1}{n}\right)^{k-1} + \sum_{i=k+1}^m \left(\frac{i-1}{n}\right)^{k-1}\left(1-\frac{i-1}{n}\right) \right],
\end{align}
\begin{align} \label{eq:E_X_k}
    \mathbb{E}[X_k^M] = R^M\cdot\frac{\Gamma(n+1)}{\Gamma(n+1+\frac{M}{D})}\cdot\frac{\Gamma(k+\frac{M}{D})}{\Gamma(k)}.  
\end{align}
Here $R$ represents the radius of the unit-volume hyper-ball, which is given by:
\begin{align} \label{eq:R}
    R = \frac{\left(\Gamma (\frac{D}{p}+1)\right)^{\frac{1}{D}}}{2\Gamma(\frac{1}{p}+1)}, 
\end{align} 
and $I_{z}(a,b) = \frac{\text{B}(z;a,b)}{\text{B}(a,b)}$ is the regularized beta function, $\text{B}(z;a,b) = \int_0^z t^{a-1}(1-t)^{b-1} \text{d}t$ is the incomplete beta function, $\text{B}(a,b)=\int_0^1 t^{a-1}(1-t)^{b-1} \text{d}t = \frac{\Gamma(a)\Gamma(b)}{\Gamma(a+b)}$ is the beta function, and $\Gamma(z) = \int_0^{\infty}t^{z-1}e^{-t} \text{d}t$ is the gamma function.

\subsection{Scaling property}
\label{sec:scaling}
By setting $M = 1$, Equation \eqref{eq:E_X_general} directly provides an estimate of the expected matching distance $\mathbb{E}[X]$ in a unit-volume hyper-ball. % with a fixed number of vertices, by setting $M = 1$. 
We now examine how $\mathbb{E}[X]$ scales with the volume of the matching region under fixed vertex densities (i.e., the number of vertices per unit volume), and show how this scaling behavior varies with the supply-to-demand ratio.
%it rapidly converges to a finite constant as the region's volume becomes sufficiently large. 
%While in balanced cases, it asymptotically increase with respect to the region size in low-dimensional spaces. 
%, as long as the densities of demand and supply vertices are not equal.

Now let $m$ and $n$ represent the densities of demand and supply vertices, respectively, and let the volume of the hyper-ball be $V$.
The numbers of demand and supply vertices for matching become $mV$ and $nV$, respectively, and the radius of the hyper-ball becomes $R_V = R V^{\frac{1}{D}}$, where $R$ is given by Equation \eqref{eq:R}.
Substituting these values into Equation \eqref{eq:E_X_general}, the %formula for estimating the 
expected matching distance becomes:
\begin{align} \label{eq:E_X_V}
    \mathbb{E}[X] 
    = \frac{RV^{\frac{1}{D}}\Gamma(nV+1)}{mV\Gamma(nV+1+\frac{1}{D})}\sum_{i=1}^{mV} \left[\sum_{k=1}^i \left(\frac{i-1}{nV}\right)^{k-1}\left(1-\frac{i-1}{nV}\right)\frac{\Gamma(k+\frac{1}{D})}{\Gamma(k)} + \left(\frac{i-1}{nV}\right)^{i-1}\frac{\Gamma(i+\frac{1}{D})}{\Gamma(i)}\right].
\end{align}
Next, we examine how Equation \eqref{eq:E_X_V} scales with $V$ under varying $n/m$ ratios. 

First, in the asymptotic case when %According to the convergence property identified by \citet{shen_zhai_ouyang_2024}, when the density of supply vertices significantly exceeds the density of demand vertices (i.e., 
$n \gg m$, it is easy to show \citep{shen_zhai_ouyang_2024} that $\mathbb{E}[X]$ converges to the expected nearest-neighbor distance, which is given by: 
\begin{align} \label{eq:E_X_nearest}
\mathbb{E}[X] 
\xrightarrow{n \gg m} RV^{\frac{1}{D}} \cdot \Gamma\left(1 + \frac{1}{D} \right) \cdot (nV)^{-\frac{1}{D}} = R \cdot \Gamma\left(1 + \frac{1}{D} \right) \cdot n^{-\frac{1}{D}}.
\end{align}
%Here the nearest-neighbor matching corresponds to the scenario where 
Intuitively, when $n \gg m$, each demand vertex is highly likely to be matched to its nearest supply vertex, and the influence of other competing demand vertices is negligible.
%After considering the scale of the region volume, we observe from Equation \eqref{eq:E_X_nearest} that, when $n \gg m$
Hence, not surprisingly, Equation \eqref{eq:E_X_nearest} shows that the expected matching distance is independent of both $m$ and $V$.

For general values of $m$ and $n$, it can be shown that, as long as the densities of demand and supply vertices are not (nearly) equal (i.e., $n\gnapprox m$), $\mathbb{E}[X]$ becomes largely independent of $V$. % as $V$ becomes sufficiently large.
To demonstrate this, we approximate $\mathbb{E}[X]$ by simplifying the gamma functions and summations in Equation~\eqref{eq:E_X_V}, %. This approximation, denoted by $\hat{\mathbb{E}}[X]$, is expressed 
as follows:
\begin{align} \label{eq:E_X_poly}
\hat{\mathbb{E}}[X] =
\frac{R}{mn^{\frac{1}{D}}V}\left[1+\sum_{i=2}^{mV} \left(\frac{nV}{i-1}-1\right)\text{Li}_{-\frac{1}{D}}\left(\frac{i-1}{nV}\right)\right],
\end{align}
where $\text{Li}_{s}\left( x \right) = \sum_{k=1}^{\infty} x ^{k}/k^{s}$ is the poly-logarithm function. %The error bound between 
\citet{shen_zhai_ouyang_2024} proved that the approximation error $|\mathbb{E}[X] - \hat{\mathbb{E}}[X]|$ is very small and asymptotically approaches $0$ as $n$ increases. 
As such, $\hat{\mathbb{E}}[X]$ and $\mathbb{E}[X]$ share approximately the same scaling behavior with respect to parameters such as $m$, $n$, $D$ and $V$. 
Then, by analyzing the monotonicity of the term inside the summation of Equation \eqref{eq:E_X_poly}, we can derive both lower and upper bounds for $\hat{\mathbb{E}}[X]$% by approximating the summation with two definite integrals
, as stated in the following proposition. 

\begin{prop} \label{prop:E_X_bound}
    %Given $m\in \mathbb{Z}^+, n\in \mathbb{Z}^+$, %, D\ge2$,
    \begin{align} \label{eq:E_X_bounds}
        \frac{R}{mn^{\frac{1}{D}-1}}\int_0^{\frac{m}{n}-\frac{1}{nV}} \left(\frac{1}{x}-1\right)\operatorname{Li}_{-\frac{1}{D}}\left(x\right)\operatorname{d}x
        \le
        \hat{\mathbb{E}}[X] 
        \le \frac{R}{mn^{\frac{1}{D}-1}}\int_0^{\frac{m}{n}} \left(\frac{1}{x}-1\right)\operatorname{Li}_{-\frac{1}{D}}\left(x\right)\operatorname{d}x.
    \end{align}
\end{prop}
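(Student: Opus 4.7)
The plan is to reinterpret $\hat{\mathbb{E}}[X]$ in Equation \eqref{eq:E_X_poly} as a left Riemann sum and then sandwich it between two integrals by exploiting monotonicity of the summand. Define
\[
f(x) := \left(\tfrac{1}{x}-1\right)\text{Li}_{-\frac{1}{D}}(x),
\]
and observe that $\text{Li}_{-\frac{1}{D}}(x) = x + 2^{1/D} x^{2} + \cdots$, so $\lim_{x \to 0^{+}} f(x) = 1$; extend $f$ continuously by $f(0) := 1$. Setting $u_{i} := (i-1)/(nV)$ and $\Delta u := 1/(nV)$, the $i=1$ term then reproduces the ``$+1$'' inside the brackets of Equation \eqref{eq:E_X_poly}, and one obtains the compact form
\[
\hat{\mathbb{E}}[X] = \frac{R}{m\,n^{1/D-1}} \cdot \Delta u \sum_{i=1}^{mV} f(u_{i}),
\]
which is $R/(m n^{1/D-1})$ times a left Riemann sum with mesh $\Delta u$ on the partition $0 = u_{1} < u_{2} < \cdots < u_{mV}$ of $[0, m/n]$.

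Next I would establish that $f$ is nondecreasing on $[0, m/n] \subseteq [0, 1)$. The cleanest route is to plug in the defining series $\text{Li}_{-\frac{1}{D}}(x) = \sum_{k \geq 1} k^{1/D} x^{k}$ and reorganize the factor $(1-x)/x$:
\[
f(x) = (1-x) \sum_{k=1}^{\infty} k^{1/D} x^{k-1} = 1 + \sum_{k=1}^{\infty} \bigl[(k+1)^{1/D} - k^{1/D}\bigr] x^{k}.
\]
Since $D > 0$, every coefficient of this power series is nonnegative, so $f'(x) \geq 0$ throughout the disk of convergence $|x| < 1$. I expect this reorganization to be the main technical step; once it is in hand, the remainder is bookkeeping.

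Given that $f$ is nondecreasing, the two bounds follow from standard comparisons. For the upper bound, $f(u_{i}) \Delta u \leq \int_{u_{i}}^{u_{i+1}} f(u)\,\mathrm{d}u$ for each $i = 1, \dots, mV$; summing the telescoped intervals gives $\Delta u \sum_{i=1}^{mV} f(u_{i}) \leq \int_{0}^{m/n} f(u)\,\mathrm{d}u$, which is the right-hand inequality after multiplying by $R/(m n^{1/D-1})$. For the lower bound, $f(u_{i}) \Delta u \geq \int_{u_{i-1}}^{u_{i}} f(u)\,\mathrm{d}u$ for $i \geq 2$, while the $i = 1$ term contributes the nonnegative quantity $f(0) \Delta u = \Delta u \geq 0$; adding these gives $\Delta u \sum_{i=1}^{mV} f(u_{i}) \geq \int_{0}^{m/n - 1/(nV)} f(u)\,\mathrm{d}u$, which is the left-hand inequality.

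The obstacles are modest but worth care: (i) justifying the continuous extension $f(0) = 1$ so that the lone ``$+1$'' inside the bracket can be legitimately absorbed as the $i = 1$ term of a genuine Riemann sum, and (ii) tracking boundary indices accurately so that the $1/(nV)$ shift appears only in the lower endpoint of the lower-bound integral. The monotonicity argument itself, once the power-series reorganization is spotted, is transparent and avoids any delicate analytic estimates.
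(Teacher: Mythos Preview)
Your proposal is correct and follows the same overall architecture as the paper: rewrite $\hat{\mathbb{E}}[X]$ as a constant times a left Riemann sum of $f(x)=(1/x-1)\operatorname{Li}_{-1/D}(x)$, establish that $f$ is nondecreasing on $[0,1)$, and then sandwich the sum between the two integrals. The integral-comparison bookkeeping is essentially identical to the paper's (the paper works in the $i$-variable and substitutes $x=(i-1)/(nV)$ at the end, you work directly in $x$; the dropped $i=1$ contribution corresponds to the paper's passage from $\int_0^{mV}$ to $\int_1^{mV}$).

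The one genuine difference is your monotonicity argument. The paper proves this as a separate lemma (its Lemma~\ref{lemma:f_i}) by differentiating, obtaining $g(x,s)=(1-x)\operatorname{Li}_{s-1}(x)-\operatorname{Li}_s(x)$, and then arguing that $g$ is monotone in $s$ so that it suffices to check the endpoint cases $s=0$ and $s=-1$ via the closed forms $\operatorname{Li}_0(x)=x/(1-x)$, $\operatorname{Li}_{-1}(x)=x/(1-x)^2$, $\operatorname{Li}_{-2}(x)=x(1+x)/(1-x)^3$. Your power-series telescoping
\[
f(x)=(1-x)\sum_{k\ge1}k^{1/D}x^{k-1}=1+\sum_{k\ge1}\bigl[(k+1)^{1/D}-k^{1/D}\bigr]x^{k}
\]
is shorter and more transparent: it simultaneously yields the continuous extension $f(0)=1$ (so the ``$+1$'' in Equation~\eqref{eq:E_X_poly} really is the $i=1$ term) and monotonicity in one stroke, and it works for all $D>0$ rather than only $D\ge1$. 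The paper's route, on the other hand, isolates the sign of $f'$ as a function of the polylog order $s$, which is what later lets it argue the $D$-monotonicity used in scaling property~(b); your identity does not directly give that, though it could be adapted.
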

\begin{proof}
Let function $f(i\mid n,V,D)=\left(\frac{nV}{i-1}-1\right)\text{Li}_{-\frac{1}{D}}\left(\frac{i-1}{nV}\right)$ % denote the term 
inside the summation of Equation \eqref{eq:E_X_poly}, such that 
\begin{align} \label{eq:E_X_fi}
\hat{\mathbb{E}}[X] =
\frac{R}{mn^{\frac{1}{D}}V} \sum_{i=1}^{mV} f(i\mid n,V,D).
\end{align}
Lemma \ref{lemma:f_i} in Appendix \ref{app:f_i} shows that $f(i\mid n,V,D)$ is monotonically increasing with respect to $i$ for $\frac{i - 1}{nV} \in [0, 1)$.
Hence, the summation $\sum_{i = 1}^{mV} f(i\mid n,V,D)$ can be bounded using two definite integrals with appropriately chosen limits, as expressed by the 2$^\text{nd}$ and 3$^\text{rd}$ inequalities below:
\begin{align}
    \int_1^{mV}f(i\mid n,V,D) \text{ d}i 
    \le
    \int_0^{mV}f(i\mid n,V,D) \text{ d}i 
    %\le 
    %\int_0^{mV}f(i\mid n,V,D) \text{ d}i
    \le    
    \sum_{i=1}^{mV} f(i\mid n,V,D)
    \le 
    \int_1^{mV+1}f(i\mid n,V,D) \text{ d}i.
\end{align}
The first inequality clearly holds because the integrand is nonnegative. Then, Equation \eqref{eq:E_X_bounds} is obtained by substituting $x = \frac{i - 1}{nV}$ and merging the above inequalities into Equation \eqref{eq:E_X_fi}. %, we can obtain the inequalities presented in .
\end{proof}

Based on Proposition \ref{prop:E_X_bound}, we can establish the following scaling properties of $\hat{\mathbb{E}}[X]$ with respect to $V$:
\begin{itemize}
    \item[(a)] As $V \rightarrow \infty$, it is easy to see that the upper limits of both the lower and upper bounds of $\hat{\mathbb{E}}[X]$ in Equation \eqref{eq:E_X_bounds} quickly converge, and hence so should $\hat{\mathbb{E}}[X]$; i.e., %. As a result, the expected matching distance converges to the upper bound as follows. 
    \begin{align}
    \hat{\mathbb{E}}[X]
    \xrightarrow{V\to\infty}
    \frac{R}{mn^{\frac{1}{D}-1}}\int_0^{\frac{m}{n}} \left(\frac{1}{x}-1\right)\text{Li}_{-\frac{1}{D}}\left(x\right)\text{ d}x.
    \end{align}
\item[(b)] When $n \gnapprox m$, the above upper bound is a finite constant independent of $V$. This is because the expected distance decreases monotonically with the spatial dimension $D$. Therefore, for $D \geq 1$, we have:
\begin{align}
\hat{\mathbb{E}}[X] \le 
\frac{R}{mn^{\frac{1}{D}-1}}\int_0^{\frac{m}{n}} \left(\frac{1}{x}-1\right)\text{Li}_{-\frac{1}{D}}\left(x\right)\text{ d}x 
\le
\frac{R}{mn^{\frac{1}{D}-1}} \int_0^{\frac{m}{n}} \frac{1}{1-x} \text{ d}x
= \frac{-R\ln (1-\frac{m}{n})}{mn^{\frac{1}{D}-1}}.
\end{align}
Notably, the right-hand side of the above equation is unbounded as $n \to m$, but remains finite as long as $n \gnapprox m$.

\item[(c)] 
When $n \gnapprox m$, the convergence rate of $\hat{\mathbb{E}}[X]$ with respect to $V$ can be analyzed by examining the first-order derivative of the lower bound in Equation \eqref{eq:E_X_bounds}, %as shown below:
\begin{align} \label{eq:derivative_V}
%h(m,n,D,V) = 
\frac{R}{mn^{\frac{1}{D}-1}} \cdot \left(\frac{1}{x}-1\right)
\text{Li}_{-\frac{1}{D}}\left(x\right) \cdot \frac{1}{nV^2},
\end{align}
where $x = \frac{m}{n}-\frac{1}{nV} < \frac{m}{n} \le 1$.
First, it is easy to verify that the derivative is non-negative for $V\in (0,+\infty)$. Second, as shown in Figure \ref{fig:f_x} in Appendix \ref{app:f_i}, the function $\left(\frac{1}{x} - 1\right) \operatorname{Li}_{-\frac{1}{D}}(x)$ stays almost constant around $1$ for $x \lnapprox 1$, and only begins to increase significantly as $x\to 1$.
As such, when $n \gnapprox m$, the derivative monotonically decreases with $V\in (0,+\infty)$ and approaches zero at a rate of $\mathcal{O}(1/V^2)$.
These suggest that the lower bound of $\hat{\mathbb{E}}[X]$ increases rapidly at small values of $V$ and approaches its upper limit %of $\hat{\mathbb{E}}[X]$) 
when $V$ becomes sufficiently large. 
%This suggests that the expected matching distance rapidly approaches its upper bound as $V$ increases. %becomes sufficiently large.
%Moreover, the rate of convergence increases with both the dimensionality $D$ and the supply-to-demand ratio $\frac{n}{m}$.
%\item[(d)]
%In addition, given any fixed $m$, $n$, and $V$, it is straightforward to verify that 

%the derivative of Equation \eqref{eq:E_X_bounds} with respect to the spatial dimension $D\in \mathbb{Z}^+$, 
%$$
%\frac{\partial}{\partial D} h(m,n,D,V) =
%\frac{R}{m n V^2} \cdot \left( \frac{1}{x} - 1 \right)
%\cdot \left[
%\frac{1}{D^2} \ln n \cdot n^{1 - \frac{1}{D}} \cdot \operatorname{Li}_{-\frac{1}{D}}(x)
%+
%n^{1 - \frac{1}{D}} \cdot \frac{1}{D^2} \cdot \frac{\partial}{\partial D} %\operatorname{Li}_{-\frac{1}{D}}(x) 
%\right].
%$$
%is positive. This indicates that $h(m,n,D,V)$ grows monotonically with $D\in[1,+\infty)$, 
%implying that the convergence rate of $\hat{\mathbb{E}}[X]$ to its upper bound is faster in higher-dimensional spaces. 
\end{itemize}

In summary, the scaling properties (a)–(c) indicate that, for a given spatial region with fixed vertex densities $m$ and $n$, when $n \gnapprox m$, the expected matching distance becomes largely independent of the region's volume $V$, and quickly converges to a finite constant as $V$ increases.
However, for the more balanced cases where $n \gtrapprox m$, the scaling behavior could differ from that of the unbalanced cases.
As identified in property (a), the upper bound of the expected matching distance may become unbounded as $V \to \infty$. 
Specifically, when $m = n$, prior studies (e.g., \citet{caracciolo_scaling_2014}) have shown that (i) the expected matching distance scales with the region volume and goes to infinity as $V \to \infty$ for $D = 1$ and $D = 2$, while (ii) the distance converges to a finite constant for $D \ge 3$. 
%These findings are also consistent with property (d).
%all terms associated with $D$ in $\mathbb{E}[X]$ in Equation \eqref{eq:E_X_V}, i.e., $R, \frac{\Gamma(k+\frac{1}{D})}{\Gamma(n+\frac{1}{D}+1)}, \frac{\Gamma(i+\frac{1}{D})}{\Gamma(n+\frac{1}{D}+1)}$ all increase with $D$, and hence $\mathbb{E}[X]$ should increase monotonically with $D$.

These analytical findings will be further verified by the numerical results in Section \ref{sec:numerical}. 
They also imply that, except for the exactly balanced cases, optimal matching in RBMP primarily occur among local neighbors, and hence the local vertex densities dictate the expected optimal matching distance. %result across the entire region. 
This insight serves as a foundation for extending the analysis to heterogeneous RBMPs.

\subsection{Impacts of matching radius}
%To pave the way for our analysis of heterogeneous RBMPs, we first 
We now examine how the optimal matching distance of static homogeneous RBMPs will be further affected by imposing a maximum allowable matching radius. 
As illustrated in Figure \ref{fig:radius and pooling}, this radius truncates the matching distance at a specified threshold, which helps prevent excessively long deadheading for demand/supply vertices. 
%-- in the context of mobility systems, this imposes a limit on the expected supply turnaround time in ST-RBMP and potentially mitigates WGC. However, some demand vertices may remain unmatched at the current decision epoch and be left over to the later ones. As such, the radius must be carefully chosen to balance the competing matching objectives, as will be discussed later in Section \ref{sec:RBMP_dynamic}. 
This section focuses on adapting Equations \eqref{eq:F_X_general} and \eqref{eq:E_X_general} 
%to estimate the resulting matching probability and expected distance 
under such truncation. 

%Consider a static and homogeneous RBMP with vertex densities $m$ and $n$ within a unit-volume hyper-ball. 
Let $r \in [0, 1]$ denote the matching radius as a proportion of the hyper-ball's radius $R_V = R V^{\frac{1}{D}}$. Any pair of vertices is considered infeasible for matching if their distance exceeds $rR_V$.
Let tuple $\chi = (m, n, r, V)$ denote the key parameters that determine the matching outcomes under this setting.
We focus on quantifying two key metrics: the successful matching probability, which represents the proportion of demand vertices that are successfully matched, denoted by $p(\chi)$, and the expected matching distance per successfully matched vertex, denoted by $d(\chi)$. 
%Based on the scaling properties, $V$ primarily  affects these metrics in balanced cases.

%Note that in the trivial case where $r = 1$ (i.e., no matching radius is imposed), we have $p(m,n,r) = 1$, and $d(m,n,r) = \mathbb{E}[X]$ as given by Equation \eqref{eq:E_X_general}. 
For general values of $r \in [0,1]$, the two-step approach introduced at the beginning of Section \ref{sec:RBMP_static} for deriving Equations \eqref{eq:F_X_general} and \eqref{eq:E_X_general} still applies. In particular, the probability of matching to one's $k$-th nearest neighbor, $\mathbb{P}(k)$, in step (i) remains unchanged. Yet, the distribution of the matching distance to the $k$-th nearest neighbor in step (ii) is truncated by $X_k \leq rR_V$.
Therefore, the overall matching probability $p(\chi) = \Pr\{X\le rR_V\}$ can be estimated by the value of CDF of $X$ %from Equation \eqref{eq:F_X_general} 
at $x = rR_V$, as follows:
\begin{align} \label{eq:p_mnr}
p(\chi)
\approx \sum_{k=1}^{mV} \mathbb{P}(k) \cdot F_{X_k}(rR_V)
= \sum_{k=1}^{mV} \mathbb{P}(k) \cdot I_{r^D}(k, nV - k + 1).
\end{align}
In addition, the $M$-th moment of the matching distance under such a truncation, $\mathbb{E}[X^M \mid X \le rR_V]$, can be derived by replacing %the $M$-th moment of the $k$-th distance $\mathbb{E}[X_k^M]$ in 
Equation \eqref{eq:E_X_general} by:
$$\mathbb{E}[X_k^M \mid X_k \le rR_V] = \frac{\int_0^{rR_V} x^M\, \mathrm{d}F_{X_k}(x)}{F_{X_k}(rR_V)}.$$
As a result, we have: 
\begin{align} \label{eq:E_X_moment}
\begin{split}
%d(m,n,r)
\mathbb{E}[X^M \mid X \le rR_V] 
&\approx
\sum_{k=1}^{mV} \mathbb{P}(k) \cdot \mathbb{E}[X_k^M \mid X_k \le rR_V] 
= \sum_{k=1}^{mV} \mathbb{P}(k) \cdot
\frac{\int_0^{rR_V} x^M \,\mathrm{d}I_{\left(\frac{x}{R_V}\right)^D}(k,nV-k+1)}{I_{r^D}(k,nV-k+1)} \\
& = R^M V^{\frac{M}{D}} \cdot \sum_{k=1}^{mV} \mathbb{P}(k) \cdot
\frac{\mathrm{B}(r^D; k + \frac{M}{D}, nV - k + 1)}{\mathrm{B}(r^D; k, nV - k + 1)}.
\end{split}
\end{align}

It is easy to verify that when $r = 1$ (i.e., effectively, no matching radius is imposed), we have $p(\chi) = 1$, and Equation \eqref{eq:E_X_moment} reduces to Equation \eqref{eq:E_X_general}. The truncated expectation $d(\chi)$ is obtained by simply taking $M = 1$; i.e.,
\begin{align} \label{eq:d_mnr}
d(\chi) = R V^{\frac{1}{D}} \cdot \sum_{k=1}^{mV} \mathbb{P}(k) \cdot
\frac{\mathrm{B}(r^D; k + \frac{1}{D}, nV - k + 1)}{\mathrm{B}(r^D; k, nV - k + 1)}.
\end{align} 
Moreover, the variance of the corresponding optimal matching distance, $\mathbb{V}[X \mid X \le rR_V]$, can be computed from the first two moments as follows:
\begin{align} \label{eq:Var_X}
\mathbb{V}[X \mid X \le rR_V] = \mathbb{E}[X^2 \mid X \le rR_V] - \left(\mathbb{E}[X \mid X \le rR_V]\right)^2.
\end{align}

\section{Static RBMPs under Spatial Heterogeneity}
\label{sec:RBMP_spatial_heter}

Now we are ready to extend these analytical results to static and heterogeneous RBMPs, where the densities of demand and supply vertices in the region vary across locations. To effectively address the spatial heterogeneity, we assume that the region can be partitioned into a set $Z=\{1,\ldots,|Z|\}$ of smaller zones, indexed by $z$. 
%During the partition, it is recommended to cluster the areas with similar demand and supply densities into a single zone and to keep t
Each zone should be approximately round (e.g., square or hexagon in two dimensions, cube or hexagonal prism in three dimensions) to mimick a hyperball, and large enough to contain at least a few vertices from each subset.\footnote{Whenever appropriate, a few larger zones are preferred over many smaller zones.} % than } size as large as possible. Additionally, each zone should have an approximately round shape (e.g., square or hexagon), such that its radius can be approximated by that of a hyper-ball of the same size in the same dimension. 
%In our problem setting, t
The sizes of zone $z \in Z$ %after partitioning is provided as input and 
is denoted by $V_z$, and both demand and supply vertices inside this zone are %assumed to be approximately homogeneous, and the corresponding vertices are 
generated independently from homogeneous Poisson processes with mean densities $m_z$ and $n_z$, %$\forall z\in Z$, 
respectively. %\footnote{
%Here a piecewise constant approximation of the vertex densities is adopted, rather than assuming each local neighborhood to be part of an infinite plane.
%This is because, in our derivation, both the matching distance and the matching radius are proportional to the radius of the region of analysis, while the region's boundary plays a non-negligible role. In particular, in balanced RBMPs, the expected matching distance asymptotically increase with respect to the region size in low-dimensional spaces (e.g., $D = 1$ and $D = 2$)
%when no matching radius is imposed. 
%}
In this paper, to stay focused, we only discuss the case where the mean density of supply vertices is always larger or equal to that of the demand vertices in all zones; i.e., $n_z \ge m_z, \forall z\in Z$. 
Figure \ref{fig:RBMP_heter_profile} illustrates an example of a two-dimensional region partitioned into a set of hexagonal zones, where the demand density distribution is shown in the heatmap. 
For each realized problem instance, the matching is performed over the entire region while each demand vertex is subject to a zone-specific maximum matching radius $r_z, \forall z\in Z$. % imposed for its respective zone. 
An example of realized vertex locations and the corresponding optimal matching solution is shown in Figure \ref{fig:RBMP_heter_instance}. 
Unmatched demand vertices are enclosed by shaded circles, representing their respective matching radii, within which no unmatched supply vertices are available.
\begin{figure}[ht!]
    \centering
    \begin{subfigure}{0.49\textwidth}
        \centering
        \includegraphics[width=0.8\textwidth]{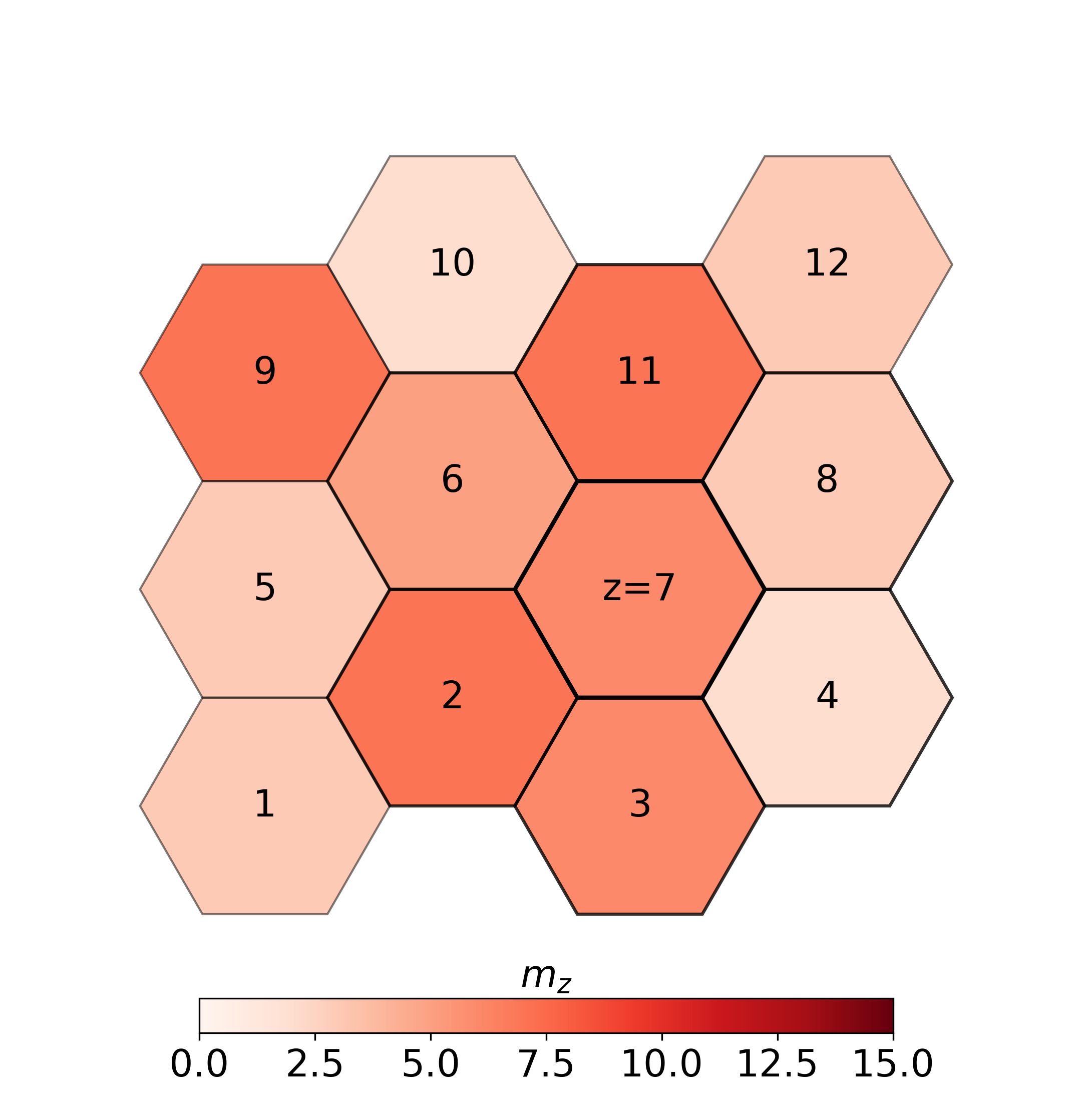}
        \caption{Example Density Distribution.}
        \label{fig:RBMP_heter_profile}
    \end{subfigure}
    %\hfill
    %\bigskip
    \begin{subfigure}{0.49\textwidth}
        \centering
        \includegraphics[width=0.8\textwidth]{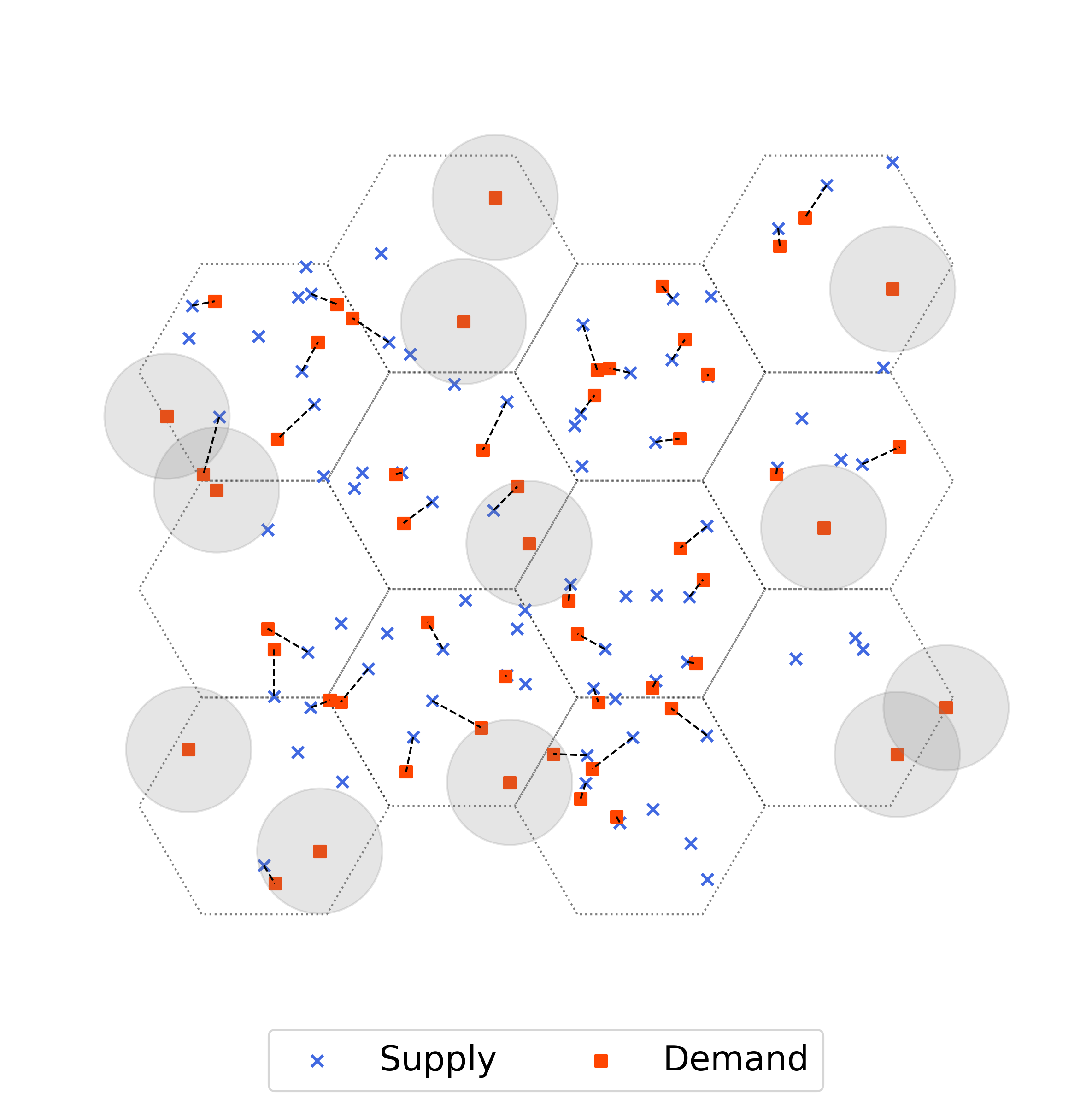}
        \caption{Example Problem Instance.}
        \label{fig:RBMP_heter_instance}
    \end{subfigure}
    \caption{RBMP under Spatial Heterogeneity.}
    \label{fig:RBMP_heter}
\end{figure}

%the sets $\mathbf{m} = \{ m_z \mid z \in Z \}$, $\mathbf{n} = \{ n_z \mid z \in Z \}$, and $\mathbf{r} = \{ r_z \mid z \in Z \}$ 
Let tuple $\chi_z = (m_z, n_z, r_z, V_z)$ represents the local parameter profile for each zone $z \in Z$, including the mean demand density, mean supply density, matching radius, and zone size, and let tuple $\boldsymbol{\chi} = (\chi_1,\ldots,\chi_{|Z|})$ represents the collective parameter profile across all zones, which captures the spatial heterogeneity in a given RBMP. 
%Given this profile, o
Our objective is to estimate the matching probability and expected matching distance per demand vertex: (i) in each zone $z \in Z$, denoted by $p_z(\boldsymbol{\chi})$ and $d_z(\boldsymbol{\chi})$, respectively; and (ii) in the entire region, denoted by $\bar{p}(\boldsymbol{\chi})$ and $\bar{d}(\boldsymbol{\chi})$, respectively.

To estimate these metrics, we first adapt the approach proposed by \citet{zhai_average_2024}, which was originally developed to estimate the expected matching distance in a discrete regular network, where vertices are distributed along homogeneous one-dimensional network edges and distances are measured along the shortest path. %However, their model only considers homogeneous edges with the same length and vertex distributions. 
Here, we model the zones in an entire region as connected ``edges" in a heterogeneous network, each with a %where the partitioned zones correspond to the connected ``edges" with 
varying size and vertex distributions.
From the perspective of a demand vertex in a specific zone $z \in Z$, a matching result can occur as one of two types:
(i) a ``local" match, where the matched supply vertex is within the same zone, with expected distance $d_z^{\text{l}}(\boldsymbol{\chi})$; and
(ii) a ``global" match, where the matched supply vertex is not within the same zone, with expected distance $d_z^{\text{g}}(\boldsymbol{\chi})$.
Let $\alpha_z$ denote the probability for a demand vertex in zone $z$ to have a global match. By the law of total expectation, the expected matching distance $d_z(\boldsymbol{\chi})$ can be expressed as:
\begin{align}
\label{eq:d_z_profile}
d_z(\boldsymbol{\chi}) = (1 - \alpha_z) \cdot d_z^{\text{l}}(\boldsymbol{\chi}) + \alpha_z \cdot d_z^{\text{g}}(\boldsymbol{\chi}).  
\end{align}

%According to \citet{zhai_average_2024}, the key to estimating 
Quantities $d_z^{\text{l}}(\boldsymbol{\chi})$, $\alpha_z$, and $d_z^{\text{g}}(\boldsymbol{\chi})$ can be %in Equation \eqref{eq:d_z_profile} is based on 
estimated via a heuristic matching process. 
%that prioritizes local matches. 
%From the perspective of a specific zone $z \in Z$, i
If the realized number of supply vertices exceeds that of demand vertices, all demand vertices are matched locally as if the zone were isolated. The number of excessive supply vertices of zone $z$ is denoted by a random variable $n_z^+$. 
Otherwise, if the realized demand exceeds supply, all supply vertices are prioritized to be locally matched with the demand vertices located closer to the center of the zone, while the excessive demand vertices, with a total number of $m_z^+$, will seek matches globally. %From the perspective of an excessive demand vertex, a 
A global match is sought through a breadth-first-search (BFS) procedure (based on adjacency) across the zones surrounding $z$. Let $Z_z^k$ be the set of zones that are reachable from $z$ in exactly the $k$-th layer. %steps in the BFS following a tree-like structure. 
For example, in Figure \ref{fig:RBMP_heter_profile}, the set of zones in the first layer of zone $z=7$ is $Z_7^1 = \{2, 3, 4, 6, 8, 11\}$, while the set of the second layer is $Z_7^2 = \{1, 5, 9, 10, 12, \cdots\}$. 
The excessive demand vertex is matched to one of the excessive supply vertices found in the nearest available layer.
Based on this process, $d_z^{\text{l}}(\boldsymbol{\chi})$, $\alpha_z$, and $d_z^{\text{g}}(\boldsymbol{\chi})$ can be estimated as follows.
\begin{itemize}
    \item[(i)] The local matching distance $d_z^{\text{l}}(\boldsymbol{\chi})$ can be effectively approximated by treating the matching within each zone as a homogeneous RBMP, given the local parameter profile $\chi_z = (m_z, n_z, r_z, V_z)$; i.e.,
    \begin{align} \label{eq:d_z_local}
    d_z^{\text{l}}(\boldsymbol{\chi}) \approx d(\chi_z),
    \end{align}
where $d(\chi_z)$ is given by Equation \eqref{eq:d_mnr}. 
\item[(ii)] The global matching probability $\alpha_z$ can be estimated as the expected fraction of globally matched demand vertices in zone $z$ as the following:
\begin{align}
\label{eq:alpha_z}
\alpha_z 
\approx \frac{\mathbb{E}[m_z^+ ]}{m_z}
=\frac{1}{m_z V_z} \cdot \text{Pr}\{m_z^+ > 0\} \cdot \mathbb{E}[m_z^+ \mid m_z^+>0]
,
\end{align}
where %$\text{Pr}\{m_z^+ > 0\}$ denotes the probability that zone $z$ has excessive demand vertices, and 
$\mathbb{E}[m_z^+ \mid m_z^+>0]$ denotes the conditional expectation of the density of excessive demand vertices. They can be estimated by approximating the distribution of $m_z^+$, which is the difference between two Poisson random variables (with means $m_z$ and $n_z$), by a normal distribution, as follows.
\begin{align}
\text{Pr}\{m_z^+ > 0\} &\approx \Phi\left(\frac{-\frac{1}{2}+(m_z-n_z)V_z}{\sqrt{(n_z+m_z)V_z}}\right), \label{eq:p_m+}\\
%\end{align}
%\begin{align} 
\mathbb{E}[m_z^+ \mid m_z^+>0] &\approx %\left\{
(m_z-n_z)V_z + \sqrt{(n_z+m_z)V_z} \cdot \frac{ \phi\left( \frac{-\frac{1}{2}+(n_z - m_z)V_z}{\sqrt{(n_z+m_z)V_z}} \right)}{1-\Phi \left( \frac{-\frac{1}{2}+(n_z - m_z)V_z}{\sqrt{(n_z+m_z)V_z}} \right)}.
\label{eq:E_m+}
%\right\}
\end{align}
Here $\phi(\cdot)$ and $\Phi(\cdot)$ are the probability density function (PDF) and the CDF of the standard normal distribution, respectively. 
\item[(iii)] The global matching distance $d_z^{\text{g}}(\boldsymbol{\chi})$ consists of three legs: 
(a) the intra-zone distance from an excessive demand vertex to the boundary of its ``origin" zone $z$;
(b) the inter-zone distance from the boundary of the origin zone to the boundary of the ``destination" zone that contains the matching point; % a global match is identified via the BFS procedure;
(c) the intra-zone distance from the boundary of the destination zone to the match point.

Among the three legs, leg (b) is directly related to the probability of finding a global match in a zone in the $k$-th layer $Z_z^k$. This probability can be computed as the likelihood that an excessive supply vertex is successfully found in the $k$-th layer, but not in any of the previous $k-1$ layers, as follows:
\begin{align} \label{eq:p_k_layer}
\left( 1-\prod_{z\in Z_z^k}\Pr\{m_z^+ > 0\}\right) \cdot \prod_{i=0}^{k-1}\prod_{z\in Z_z^i}\Pr\{m_z^+ > 0\}.
\end{align}
%In a heterogeneous RBMP, it is likely that a given zone is surrounded by some neighboring zones with unbalanced demand and supply; i.e., 
Since we assume that $n_z \ge m_z$ in all zones, the probability of having excessive demand, $\Pr\{m_z^+ > 0\}$, is likely small. Additionally, %following the layered structure in BFS, the number of zones in the $k$-th layer, $|Z_z^k|$, increases rapidly with $k$. 
%As a result, the probability of finding a global match at the $k$-th layer 
the product term in Equation \eqref{eq:p_k_layer} shall diminish rapidly to $0$ as $k$ increases. This indicates that a global match is highly likely to be found in the first few layers. 
Furthermore, if a matching radius is imposed, it further restricts the global matches to be found in nearby zones. 
As such, we simplify the analysis by assuming that all global matches are found in a zone within the first layer, $z' \in Z_z^1$; % (i.e., the nearest neighboring zones); % while ignoring global matches in more distant layers, 
i.e.,  
$d_z^{\text{g}}(\boldsymbol{\chi})$ can be approximated by using only legs (a) and (c). 

%By adapting the formulas in \citet{zhai_average_2024} to account for spatial heterogeneity, and from one dimension to higher dimensions, 
Leg (a) here can be approximated by the average expected shortest distance from each excessive demand vertex in zone $z$ to the boundary of $z$.
Under the proposed heuristic matching procedure, we expect $m_z V_z$ demand vertices in zone $z$, and the excessive ones are located farthest from the zone center. The expected number of these excessive vertices is $\mathbb{E}[m_z^+] V_z$.
For $k$-th nearest vertex, its distance to the center can be estimated by %the $k$-th nearest neighbor distance formula from 
Equation \eqref{eq:E_X_k}, and $k$ ranges from $m_z V_z - \mathbb{E}[m_z^+] V_z + 1$ to $m_z V_z$ for the excessive vertices. The corresponding distance to the boundary is the difference between the radius of $z$, $R_{V_z}$, and the distance to the center. 
Taking average across all excessive demand vertices gives the first term in Equation \eqref{eq:d_z_global}. The analysis on leg (b) %, we can first compute the average expected shortest distance from each excessive supply vertex in zone $z'$ to the boundary of $z'$ 
is exactly similar to that on leg (a), which gives %and then take another average of these estimates over all $z' \in Z_z^1$. This is represented by 
the second term in Equation \eqref{eq:d_z_global}. 
\begin{equation}
\begin{aligned}
\label{eq:d_z_global}
d_z^{\text{g}}(\boldsymbol{\chi}) 
\approx 
%\frac{1}{2}\cdot\frac{R_{V_z}}{m_z} \cdot \mathbb{E}[m_z^+ \mid m_z^+ > 0]
&\frac{1}{\mathbb{E}[m_z^+]V_z} \cdot
\sum_{k=m_zV_z-\mathbb{E}[m_z^+]V_z+1}^{m_zV_z} R_{V_z} \left(1-%R_{V_z}\cdot
\frac{\Gamma(m_zV_z+1)}{\Gamma(m_zV_z+1+\frac{1}{D})}\cdot\frac{\Gamma(k+\frac{1}{D})}{\Gamma(k)} \right)\\
&+ 
\frac{1}{|Z_z^1|}\sum_{z'\in Z_z^1} \frac{1}{\mathbb{E}[n_z^+]V_z}\cdot
\sum_{k=n_zV_z-\mathbb{E}[n_z^+]V_z+1}^{n_zV_z} R_{V_{z'}} \left(1-%R_{V_{z'}}\cdot
\frac{\Gamma(n_zV_z+1)}{\Gamma(n_zV_z+1+\frac{1}{D})}\cdot\frac{\Gamma(k+\frac{1}{D})}{\Gamma(k)} \right).
%\sum_{z'\in Z_z^1} \frac{1}{2|Z_z^1|}\cdot\frac{m_{z'}R_{V_{z'}}}{n_{z'}^2} \cdot \mathbb{E}[n_{z'}^+ \mid n_{z'}^+>0]. 
\end{aligned}
\end{equation}
%The term $\frac{R_{V_z}}{m_z}$ corresponds to the expected thickness the hyper-spherical shell near the boundary of the original zone in leg (a), where the excessive demand vertices are expected to be located. 
%The term $\frac{m_{z'}R_{V_{z'}}}{n_{z'}^2}$  corresponds to the expected thickness the hyper-spherical shell near the boundary of the destination zone in leg (c), where the excessive supply vertices are expected to be located. %first term represents the expected distance from a randomly selected excessive demand vertex to the boundary of zone $z$, while the second term represents the average expected distance from a randomly selected excessive supply vertex to the boundary of zone $z'$, $\forall z’ \in Z_z^1$.
Here $\mathbb{E}[n_{z'}^+] = \text{Pr}\{n_{z'}^+ > 0\} \cdot \mathbb{E}[n_{z'}^+ \mid n_{z'}^+ > 0]$ denotes the expected density of excessive supply vertices at zone $z'$, where the corresponding probability and conditional expectation $\text{Pr}\{n_{z'}^+ > 0\}$ and $\mathbb{E}[n_{z'}^+ \mid n_{z'}^+ > 0]$ can be computed similarly to Equations \eqref{eq:p_m+} and \eqref{eq:E_m+}, as follows:
\begin{align} 
\text{Pr}\{n_{z'}^+ > 0\} %\text{Pr}\{m^+ > 0\} = 
&\approx \Phi\left(\frac{-\frac{1}{2}+(n_z-m_z)L}{\sqrt{(m_z+n_z)L}}\right),\\
\mathbb{E}[n_{z'}^+ \mid n_{z'}^+>0] 
&\approx 
(n_{z'}-m_{z'})V_{z'} + \sqrt{(n_{z'}+m_{z'})V_{z'}} \cdot \frac{ \phi\left( \frac{-\frac{1}{2}+(m_{z'}-n_{z'})V_{z'}}{\sqrt{(n_{z'}+m_{z'})V_{z'}}} \right)}{1-\Phi \left( \frac{-\frac{1}{2}+(m_{z'}-n_{z'})V_{z'}}{\sqrt{(n_{z'}+m_{z'})V_{z'}}} \right)}.\label{eq:E_n+}
\end{align}
\end{itemize}
By combining Equations \eqref{eq:d_z_local}, \eqref{eq:alpha_z} and \eqref{eq:d_z_global} into Equation \eqref{eq:d_z_profile}, we obtain the expected matching distance $d_z(\bm{\chi})$ for each zone $z\in Z$.

It can be seen that the computation of $d_z(\boldsymbol{\chi})$ in Equation \eqref{eq:d_z_profile} involves evaluating a set of probabilities derived from normal distributions and requires the collective parameter profile for all zones in $Z_z^1$. This formula would be particularly accurate, but it could be computationally cumbersome as well --- especially if the formulas must be embedded % estimates in static and heterogeneous RBMPs.
%While if we aim to incorporate the estimates 
into other optimization or equilibrium modeling frameworks, as the case in Section \ref{sec:RBMP_dynamic}. Hence, we further propose a simpler yet effective approximation below. 

Based on the scaling properties discussed in Section \ref{sec:scaling}, optimal matching in RBMP primarily occurs among local neighbors (especially for unbalanced cases). In a discrete network, \citet{zhai_average_2024} also found that the local matching distance closely approximates the overall matching distance on a one-dimensional edge, when demand and supply distributions are unbalanced and/or when each edge has a reasonably large number of neighbors (e.g., $\ge 5$), so that  %These conditions reflect the scenarios where the excessive demand vertices are likely to find 
global matches are likely to be found within very nearby zones.
These observations suggest that, in our problem setting, where each zone has a sufficiently large number of neighbors (e.g., when $D \ge 2$, a hexagonal zone in a two-dimensional region typically has at least $6$ neighboring zones) and the surrounding zones have unbalanced demand and supply due to heterogeneity, the global matching distance can be approximated by local matching distance within the zone; that is, $d_z(\boldsymbol{\chi}) \approx d_z^{\text{l}}(\boldsymbol{\chi})$. 
%Second, based on the scaling properties discussed in Section \ref{sec:scaling}, the optimal matching distance is primarily determined by local vertex densities. This implies that the local matching distance $d_z^{\text{l}}(\boldsymbol{\chi})$ can be accurately approximated using only the local profile $\chi_z$ from homogeneous RBMPs; i.e., $d_z^{\text{l}}(\boldsymbol{\chi}) \approx d(\chi_z)$. 
As such, we propose to estimate both the expected matching distance and matching probability in each zone $z$ %of a heterogeneous RBMP 
by directly applying the results obtained from homogeneous RBMPs based on the local parameter profile $\chi_z$, as follows.
\begin{equation} \label{eq:d_p_z_approx}
    d_z(\bm{\chi}) \approx d(\chi_z), \quad p_z(\bm{\chi}) \approx p(\chi_z), \quad \forall z\in Z,
\end{equation}
In addition, the overall matching estimates for the entire region equals the weighted average of all the zone-specific local estimates: 
\begin{equation} \label{eq:P and D}
\begin{aligned}
\bar{d}(\bm{\chi}) \approx \frac{\sum_{z\in Z} m_z \cdot d(\chi_z)}{\sum_{z\in Z} m_z},
\quad
\bar{p}(\bm{\chi}) \approx \frac{\sum_{z\in Z} m_z \cdot p(\chi_z)}{\sum_{z\in Z} m_z}.
\end{aligned}
\end{equation}
These approximations will be further verified by the numerical experiments in Section \ref{sec:numerical}.

\section{Dynamic RBMP under Spatiotemporal Heterogeneity}
\label{sec:RBMP_dynamic}
In this section, we develop a dynamic modeling framework for heterogeneous ST-RBMP. %and demonstrate its application 
%in the context of mobility service in a two-dimensional space. 
In order to be specific on the system dynamics and control actions, we use the mobility service in a two-dimensional space (see Figure \ref{fig:radius and pooling}) as an example of application contexts.\footnote{This modeling framework is applicable to many other location-based problems, such as on-demand parcel delivery and emergency resource allocation.} %to illustrate the system dynamics. As shown , e
Each demand vertex represents a customer requesting for service from an origin to a destination, while each supply vertex represents a vehicle available to perform the service. 
New customers continue to enter the system according to given demand arrival patterns. 
The system periodically decides when and how to perform matching between customers and vehicles based on their current locations. 
Each customer experiences a waiting (for matching) cost as the time elapsed between the customer’s service request and its successful matching with a vehicle.
Once a match is made, the assigned vehicle moves toward the corresponding customer at a given speed, incurring a cost for vehicle traveling (deadheading) until customer pickup. 
%, incurring a matching cost equal to the travel time from the assignment to customer pickup. 
Once the vehicle reaches the customer's origin, they will take the customer to its destination. 
After the delivery is completed, the vehicle becomes available (as a newly arrived idle vehicle) again. 
%As the system evolves, the idle vehicles can also be repositioned to different locations to better accommodate with the customer demand. 
In an ``open" system with freelance drivers, the number of vehicles in the system is not constant --- existing idle vehicles may exit, or new idle vehicles may enter the system at any time. 
%The supply arrival patterns can be captured by the given parameters. 

Building upon all results in Sections \ref{sec:RBMP_static}-\ref{sec:RBMP_spatial_heter}, the ST-RBMP is formulated as a dynamic control problem over a planning horizon. % using a continuum approximation scheme. 
To determine the key hyper-parameters for real-time matching in such a system (including pooling intervals and matching radii) that can minimize the system-wide costs
The control variables include key hyper-parameters for real-time matching, including pooling intervals and matching radii, and the goal is to minimize the overall system-wide costs experienced by vehicles and customers. %trajectories of these hyper-parameters are solved based on the optimality conditions. This framework is also applicable to other general location-based problems, such as on-demand parcel delivery and emergency resource allocation.

\subsection{Problem formulation}
Consider a given region of analysis in a $D$-dimensional L$^p$ space.
The units for distance and time are denoted as du and tu, respectively. 
The densities of demand and supply vertices (i.e., customers and idle vehicles) vary slowly over both time and space. As defined in Section \ref{sec:RBMP_spatial_heter}, the region can be partitioned into a set $Z=\{1,\ldots,|Z|\}$ of zones, each with approximately homogeneous vertex distributions. The size/volume of zone $z\in Z$ is $V_z$ [du$^D$]. 
The temporal planning horizon is $[0, \mathcal{T}]$ [tu], during which new demand and supply vertices are generated independently from homogeneous Poisson processes within each zone $z \in Z$, following time-dependent rate functions $\lambda_z(t)$ [\#/du$^D$-tu] and $\mu_z(t)$ [\#/du$^D$-tu], respectively. At any time $t \in [0, \mathcal{T}]$, the mean densities of demand and supply vertices in each zone $z \in Z$ are denoted by $m_z(t)$ [\#/du$^D$] and $n_z(t)$ [\#/du$^D$], respectively. The initial densities at $t = 0$, $m_z(0)$ and $n_z(0)$, $\forall z\in Z$, are assumed to be known. 
Again, we only discuss the case where the density of supply vertices is always larger than that of the demand vertices; i.e., $n_z(t) \ge m_z(t), \forall z\in Z, t\in[0, \mathcal{T}]$. 
A set of matching decision epochs $\{t_i \,|\, i =0, 1, \dots \}$, shared across all zones, and a set of zone-specific matching radii at each epoch $\{r_z(t_i) \,|\, i =0, 1, \dots; z \in Z \}$, need to be determined jointly. 
The pooling interval is the time separation between two consecutive decision epochs, $\tau(t_i) = t_{i+1} - t_i$, for $i =0, 1, \dots$ 

At each decision epoch $t_i$, a static and heterogeneous RBMP instance is solved. The system dynamics and costs between two consecutive decision epochs can be formulated as follows.
Proper units are chosen for du and tu, such that the average vehicle travel speed can be $1$ [du/tu], and the average vehicle deadheading time equals the expected matching distance of the corresponding RBMP.
Based on the results from Section \ref{sec:RBMP_spatial_heter}, for each zone $z\in Z$ at any $t\in [0, \mathcal{T}]$, the matching probability and expected matching distance per demand vertex can be computed via Equation \eqref{eq:d_p_z_approx}, given the local parameter profile $\chi_z(t) = (m_z(t), n_z(t), r_z(t), V_z)$, as follows.
\begin{equation} \label{eq:p and d}
    \begin{aligned}
    & \resizebox{.93\hsize}{!}{$
     p[\chi_z(t)] \approx 
        \frac{1}{m_z(t)V_z} \sum_{i=1}^{m_z(t)V_z} \left[ \sum_{k=1}^{i} \left(\frac{i-1}{n_z(t)V_z}\right)^{k-1}\left(1-\frac{i-1}{n_z(t)V_z}\right)
        I_{r^D_z(t)}(k,n_z(t)V_z-k+1)
        + \left(\frac{i-1}{n_z(t)V_z}\right)^{i} 
        I_{r^D_z(t)}(i,n_z(t)V_z-i+1)
        \right],
        $}
        \\
    & \resizebox{.95\hsize}{!}{$ 
    d[\chi_z(t)] \approx 
        \frac{\left(\Gamma (\frac{D}{p}+1)\right)^{\frac{1}{D}}}{2\Gamma(\frac{1}{p}+1)m_z(t)V_z^{1-\frac{1}{D}}} \sum_{i=1}^{m_z(t)V_z} \left[ \sum_{k=1}^{i} \left(\frac{i-1}{n_z(t)V_z}\right)^{k-1}\left(1-\frac{i-1}{n_z(t)V_z}\right)
        \frac{\text{B}(r^D_z(t);k+\frac{1}{D},n_z(t)V_z-k+1)}{\text{B}(r^D_z(t);k,n_z(t)V_z-k+1)}
        + \left(\frac{i-1}{n_z(t)V_z}\right)^{i} 
        \frac{\text{B}(r^D_z(t);i+\frac{1}{D},n_z(t)V_z-i+1)}{\text{B}(r^D_z(t);i,n_z(t)V_z-i+1)}
        \right].
        $}
    \end{aligned}
\end{equation}
At the $(i+1)$-th decision epoch $t_{i+1}$, the numbers of demand and supply vertices available for matching include both the newly arrived vertices and the leftover vertices from previous decision epochs. 
Then, the demand and supply distribution at $t_{i+1}$ %, 
%$\begin{pmatrix}
%\mathbf{m}_{t_{i+1}} \\
%\mathbf{n}_{t_{i+1}} 
%\end{pmatrix}$
%$(\mathbf{m}_{t_{i+1}}, \mathbf{n}_{t_{i+1}})$, 
satisfies the following:
\begin{equation}
\begin{aligned}\label{eq:mn_t_i+1}
m_z(t_{i+1}) &
=  \lambda_z(t_i) \tau(t_i) + m_z(t_{i}) - p[\chi_z(t_i)] m_z(t_{i}), % \ge 1/V_z, 
\text{  and}
\\
%\quad
n_z(t_{i+1}) &
= \mu_z(t_i) \tau(t_i) 
+ n_z(t_i) - p[\chi_z(t_i)]
m_z(t_{i}), % \ge 1/V_z,
\quad
\forall z\in Z.
\end{aligned}
\end{equation}
The total cost incurred within $[t_{i}, t_{i+1})$ includes three parts: (i) the total matching distance/time for all successfully matched vertices at $t_{i}$, (ii) the total waiting time for all newly arrived demand vertices, and (iii) the total additional waiting time for all leftover demand vertices, summed across all zones:
\begin{equation}
\begin{aligned}
\label{eq:cost_t_i}
\sum_{z\in Z} \left\{ %\right.
m_z(t_{i}) \cdot p[\chi_z(t_i)] %&
\cdot d[\chi_z(t_i)]
+
\lambda_z(t_i) \frac{\tau^2(t_i)}{2} +
%\\
%&+
m_z(t_{i}) \cdot [1-p[\chi_z(t_i)] \cdot \tau(t_i)
%\left. 
\right\}.
\end{aligned}
\end{equation}

Next, given that the distribution of demand and supply vertices vary slowly over time, we use a continuum approximation scheme to model the overall system dynamics and costs evolution. Instead of tracking the optimal matching decisions (timing and maximum radii) at discrete decision epochs, we look for functions of the optimal pooling intervals and matching radii as trajectories over continuous time: $\tau(t)$ and $r_z(t)$, $\forall z\in Z, t\in[0, \mathcal{T}]$. 
Let vector 
$\mathbf{u}(t)=[\tau(t), r_1(t),\ldots,r_{|Z|}(t)]^T$
%=\begin{pmatrix}
%\tau(t) \\
%\mathbf{r}_{t}
%\end{pmatrix}$ 
represents the control decisions at time $t$, and vector $\mathbf{x}(t)=[m_1(t),\ldots,m_{|Z|}(t), n_1(t),\ldots,n_{|Z|}(t)]^T$
%\begin{pmatrix}
%\mathbf{m}(t) \\
%\mathbf{n}(t) 
%\end{pmatrix}$ 
represents the system state (demand and supply distributions) at time $t$. 
The objective is to minimize the total system-wide cost over the entire planning horizon:
\begin{equation} \label{eq:obj_all}
\phi[\mathbf{x}(\mathcal{T})] + \int_{0}^{\mathcal{T}} \mathscr{L}[\mathbf{x}(t), \mathbf{u}(t)] \text{ d}t.
\end{equation}
Here $\phi[\mathbf{x}(\mathcal{T})]$ represents the penalty incurred by the total number of leftover demand and supply vertices at the end of the horizon $\mathcal{T}$; i.e., 
\begin{equation} \label{eq:penality}
    \phi[\mathbf{x}(\mathcal{T})] = \sum_{z\in Z}\left[m_z(\mathcal{T})+n_z(\mathcal{T})\right]V_z.
\end{equation}
Here $\mathscr{L}[\mathbf{x}(t), \mathbf{u}(t)]$ represents the cost rate incurred at time $t$, which is an approximation of 
Equation \eqref{eq:cost_t_i}:
\begin{equation} \label{eq:obj_t}
\mathscr{L}[\mathbf{x}(t), \mathbf{u}(t)] 
\approx 
\sum_{z\in Z} \left\{ 
\frac{m_z(t) \cdot p[\chi_z(t)] \cdot d[\chi_z(t)]}{\tau(t)}
+
\lambda_z(t) \frac{\tau(t)}{2}
+
m_z(t) \cdot [1-p[\chi_z(t)]]
\right\}.
\end{equation}
In addition, the system dynamics given by Equation \eqref{eq:mn_t_i+1} can be rewritten as a system of differential equations:
\begin{equation} \label{eq:diff_sys}
\dot{m}_z(t) = \lambda_z(t) - \frac{ p[\chi_z(t)]\cdot m_z(t)}{\tau(t)}, 
\quad 
\dot{n}_z(t) = \mu_z(t) - \frac{ p[\chi_z(t)]\cdot m_z(t) }{\tau(t)},
\quad
\forall z\in Z, t \in [0, \mathcal{T}].
\end{equation}

Summarizing the above, the optimal control problem can be formulated as follows:
\begin{align}
    \min \quad 
    & \eqref{eq:obj_all} \nonumber \\
    \text{s.t.} \quad 
    & \eqref{eq:p and d}, \eqref{eq:penality}-\eqref{eq:diff_sys},\nonumber
    \\
    & \frac{1}{\lambda_z(t)V_z} \le \tau(t) \le \mathcal{T}  \text{ and } 0\le r_z(t) \le 1, \quad \forall z\in Z, t\in[0, \mathcal{T}]. \label{cons:tau_r}
\end{align}
Equation \eqref{cons:tau_r} defines the bounds on the control decisions $\tau(t)$ and $r_z(t)$. Specifically, the lower bound on $\tau(t)$ ensures that, on average, at least one demand vertex is present in each zone to trigger the matching decisions.  

\subsection{Solution method}
\label{sec:solution}
Since the system dynamics and cost evolution are nonlinear, directly solving the above formulation to identify the exact time-varying optimal control trajectories is challenging. 
In this section, we adopt an indirect method to solve for a set of  equations provided by the optimality conditions, which is more computationally efficient.

By introducing $\bm{\varphi}(t)$ as the adjoint (costate) vector of Lagrange multipliers associated with the system dynamics, the Hamiltonian of the control problem can be written as follows: 
\begin{equation}
\mathscr{H}[\mathbf{x}(t), \mathbf{u}(t), \bm{\varphi}(t)] = \mathscr{L}[\mathbf{x}(t), \mathbf{u}(t)] + \bm{\varphi}^T(t)\mathbf{f}[\mathbf{x}(t), \mathbf{u}(t)],
\end{equation}
where $\mathbf{f}[\mathbf{x}(t), \mathbf{u}(t)] = \dot{\mathbf{x}}(t)$ represents the system dynamics constraints as defined in Equation \eqref{eq:diff_sys}. Let $\nabla\mathbf{f}(t) = \frac{\partial\mathbf{f}[\mathbf{x}(t), \mathbf{u}(t)]}{\partial\mathbf{x}(t)}$ denote the Jacobian matrix of $\mathbf{f}[\mathbf{x}(t), \mathbf{u}(t)]$ with respect to the state vector $\mathbf{x}(t)$ at time $t$. 
According to the Pontryagin's Minimum Principle (PMP), the optimal solution must satisfy the following conditions to ensure that the Hamiltonian is minimized (or stationary) with respect to infinitesimal variations in the control variables: 
\begin{enumerate}
    \item[(a)] The optimal control vector $\mathbf{u}^*(t)$ minimizes the Hamiltonian at each time step $t$; i.e.,
    \begin{align}\label{eq:u_hamilton}
    \mathbf{u}^*(t) = \underset{\mathbf{u}(t)}{\arg\min} \text{ } \mathscr{H}[\mathbf{x}(t), \mathbf{u}(t), \bm{\varphi}(t)], \quad \forall t\in [0, \mathcal{T}].
    \end{align}
    \item[(b)] The costate vector $\bm{\varphi}(t)$ satisfies the following differential equations: 
    \begin{align} \label{eq:phi_dynamics} 
    \dot{\bm{\varphi}}(t) = -\nabla\mathbf{f}^T(t)\bm{\varphi}(t) - \left[\frac{\partial \mathscr{L}[\mathbf{x}(t), \mathbf{u}(t)]}{\partial \mathbf{x}(t)}\right]^T, \quad \forall t\in [0, \mathcal{T}].
    \end{align}
    \item[(c)]
    The terminal condition at time $t=\mathcal{T}$ is given by: 
    \begin{align}\label{eq:phi_boundary}
    \bm{\varphi}^T(t) = \frac{\partial \phi[\mathbf{x}(t)]}{\partial \mathbf{x}(t)}\Bigg|_{t=\mathcal{T}}.
    \end{align} 
\end{enumerate}

Several indirect methods can be used to solve for the optimal control trajectory $\mathbf{u}^*(t)$ and the associated costate vector $\bm{\varphi}(t)$ based on the above conditions. Common approaches include the shooting methods \citep{betts_survey_1998, passenberg_theory_2012} and the Forward-Backward Sweep Method (FBSM) \citep{lenhart_optimal_2007, mcasey_convergence_2012}, which differ in whether the initial guess is made for the costate or control variables. Here we adopt FBSM, as it is easy to generate an initial and reasonably good guess for the control variables in our problem setting. The key steps %for implementing the proposed FBSM 
are outlined as follows.
\begin{enumerate}
    \item Set the initial guess for the control trajectories as $\mathbf{u}(t) = \mathbf{1},\ \forall t \in [0, \mathcal{T}]$, where the pooling interval is 1 [tu], and the matching radius is 1 [du] for all zones.  
    \item Given the initial system state $\mathbf{x}(0)$ and the current control trajectories $\mathbf{u}(t)$, solve the state trajectories $\mathbf{x}(t)$ forward in time over the interval $[0, \mathcal{T}]$ based on the differential equations given by Equation \eqref{eq:diff_sys}. 
    \item Using the terminal condition at $t = \mathcal{T}$ as specified by Equation \eqref{eq:phi_boundary}, along with the current control trajectories $\mathbf{u}(t)$ and state trajectories $\mathbf{x}(t)$, solve the costate variables $\bm{\varphi}(t)$ backward in time over $[0, \mathcal{T}]$ based on the differential equations given by Equation \eqref{eq:phi_dynamics}. 
    \item Update the control trajectories $\mathbf{u}(t)$ at each $t \in [0, \mathcal{T}]$ by solving the Hamiltonian minimization problem given in Equation \eqref{eq:u_hamilton}. 
    \item If the control trajectories $\mathbf{u}(t)$ have not converged in the past iteration step, return to Step 2 and repeat the process.
\end{enumerate}

%By solving the system of equations in \eqref{eq:Euler-L}, we can determine feasible trajectories of $\bm{\tau}$ and $\mathbf{r}$ that satisfy the local optimality conditions. To further improve the solution, we may check for additional conditions, such as the Legendre-Clebsch (convexity) condition. This can be verified by examining the second-order derivative of the Hamiltonian. More detail of the solution approach will be presented in the full paper.

\section{Numerical Experiments}
\label{sec:numerical}
In this section, we first present a series of numerical experiments to verify the accuracy of the proposed analytical formulas for static homogeneous and heterogeneous RBMPs. Then, we showcase the applicability of the dynamic ST-RBMP in planning mobility services under different operational settings.

\subsection{Verification of formulas for static and homogeneous RBMPs}
A set of Monte Carlo simulations is conducted to verify the analytical analyses presented in Section \ref{sec:RBMP_static}.
All simulations are performed in an Euclidean space (i.e., $p = 2$), while other key parameters, including the spatial dimensionality $D$, region volume $V$, demand vertex density $m$, supply vertex density $n$, and matching radius $r$, are varied.
For each combination of parameter values, 100 RBMP instances are generated.
For each realized problem instance, %given the realized spatial locations of demand and supply vertices within the specified region, 
the optimal matching is obtained using the commercial solver Gurobi.
The sample means and/or standard deviations of the matching probability and/or the average matching distance per demand vertex, averaged across the 100 instances, are computed and compared with the analytical predictions.

\subsubsection{Scaling property}
We begin with examining the scaling behavior of the expected matching distance, $\mathbb{E}[X]$, with respect to the region volume $V$ in static homogeneous RBMPs.
Both two- and three-dimensional spaces are considered (i.e., $D = 2$ and $D = 3$), as they are of particularly interest to real-world applications. Here, we set the base demand density to be $m = 2$ per unit volume, and test four scenarios with supply-to-demand ratios $\frac{n}{m} \in \{1, 1.5, 2, 3\}$. The region volume $V$ is varied from 1 to 50. % to demonstrate how the expected matching distance scales with the increasing volume.  
Figure \ref{fig:dist_V} compares the sample means from the Monte Carlo simulations (discrete markers) with the analytical predictions of $\mathbb{E}[X]$ from Equation \eqref{eq:E_X_V} (continuous curves). % lines with different styles for each supply-to-demand scenario, while the corresponding sample means across all simulated RBMP instances are shown as distinct markers.

\begin{figure}[h]
    \centering
    \includegraphics[width=\textwidth]{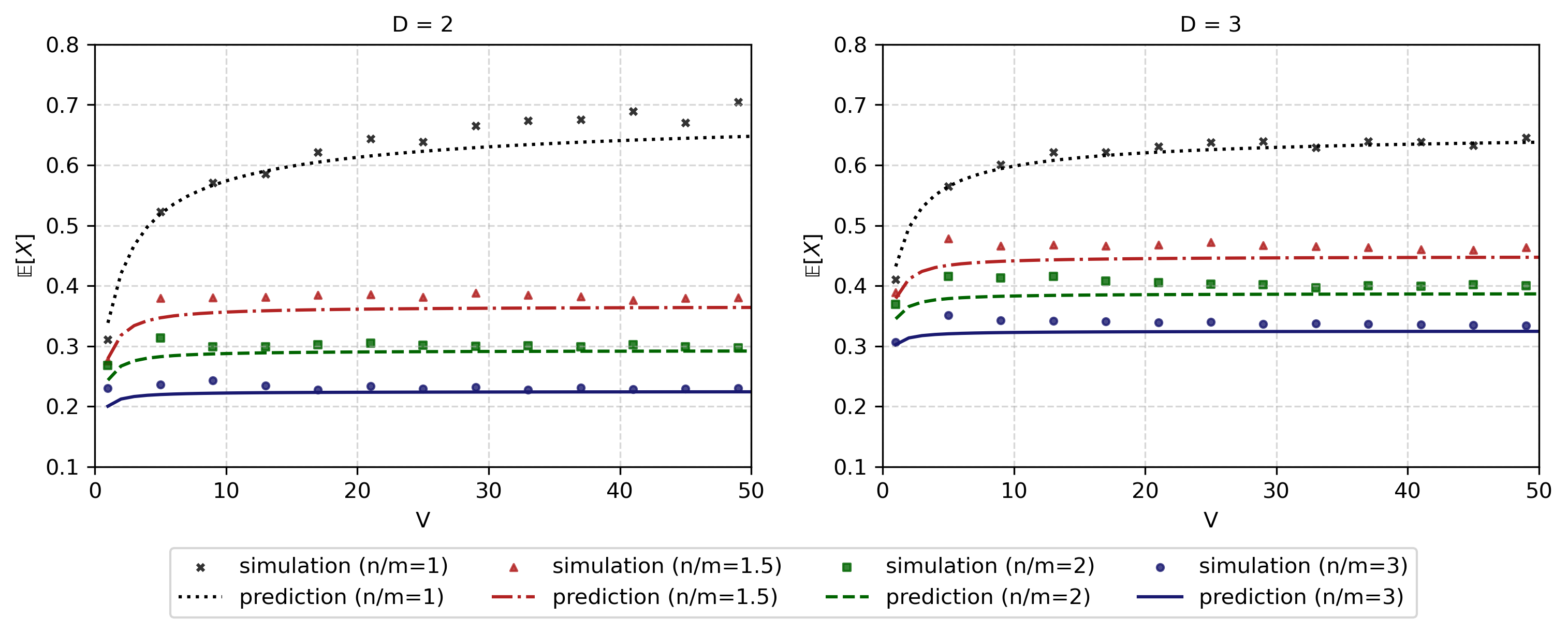}
    \caption{Verification of the Scaling Property.}
    \label{fig:dist_V}
\end{figure}

As shown in Figure \ref{fig:dist_V}, the analytical predictions align closely with the sample means across all parameter combinations. For $D = 2$, the average relative errors in the estimates for $\frac{n}{m} \in \{1, 1.5, 2, 3\}$ are 10.01\%, 7.27\%, 5.82\%, and 5.09\%, respectively. For $D = 3$, the corresponding errors are 4.38\%, 6.10\%, 5.85\%, and 5.20\%.
These results are consistent with the findings in \citet{shen_zhai_ouyang_2024}.\footnote{This reference presented a set of more accurate, but more complex, formulas and detailed comparative analyses for homogeneous RBMPs.} As such, for practical purposes, the proposed formula in Equation \eqref{eq:E_X_V} offers a reasonable trade-off between accuracy and computational efficiency.

In addition, the scaling behavior of the expected matching distance $\mathbb{E}[X]$ is also clearly illustrated in Figure \ref{fig:dist_V}. %,which are consistent with the analytical scaling properties (a)–(c) in Section \ref{sec:scaling}. % across different supply-to-demand scenarios. 
As identified in the analytical properties (a)–(c) in Section \ref{sec:scaling}, $\mathbb{E}[X]$ gradually converges to a finite value as $V$ increases in unbalanced cases, but may go to infinity in the perfectly balanced case. This is consistent with the observations in Figure \ref{fig:dist_V}. When $\frac{n}{m} \in \{1.5, 2, 3\}$,  $\mathbb{E}[X]$ quickly converges to a finite value as $V$ increases. For the balanced case with $\frac{n}{m} = 1$, $\mathbb{E}[X]$ continues to increase with $V$ beyond the tested range of values. 
Notably, greater imbalance (i.e., larger values of $\frac{n}{m}$) leads to faster convergence. 
In addition, the results also indicate that in those unbalanced cases, once $\mathbb{E}[X]$ has converged, the ratio $\frac{n}{m}$ alone (regardless of the exact values of $m$ and $n$) dictates the expected matching distance. This again is consistent with our discussion in Section \ref{sec:scaling}.
%Finally, as noted in property (d), the convergence rate is faster in higher-dimensional spaces. Figure \ref{fig:dist_V} confirms that the convergence is consistently faster in the three-dimensional case ($D = 3$) than in the two-dimensional case ($D = 2$), across all tested scenarios. 
%All these trends are consistent with the analytical scaling properties (a)–(c) in Section \ref{sec:scaling}. 
%In (nearly) balanced RBMPs, the expected matching distance asymptotically increases with the size of the matching region. 
%For more unbalanced cases, the impact of $|V|$ becomes negligible. Additionally, the results also indicate that when demand and supply densities are sufficiently large, the exact values of $m$ and $n$ are no longer necessary, the supply-to-demand ratio $\frac{n}{m}$ alone is sufficient to characterize the expected matching distance.

\subsubsection{Matching radius}
We next verify the analytical formulas for estimating the matching probability $p(\chi)$ and the expected matching distance $d(\chi)$, where $\chi=(m,n,r,V)$, in static homogeneous RBMPs. % under a maximum matching radius.
The simulations are conducted in a unit-volume hyper-ball in two-dimensional space (i.e., $D = 2$, $V = 1$).
Again, we consider four supply-to-demand scenarios with $\frac{n}{m} \in \{1, 1.5, 2, 3\}$ and set $m = 10$. 
For each scenario, the maximum matching radius $r$ is varied from 0 to 1. % to show its effect on both matching probability and distance.

%\begin{comment}
\begin{figure}[ht!]
    \centering
    \begin{subfigure}{\textwidth}
        \centering
        \includegraphics[width=\textwidth]{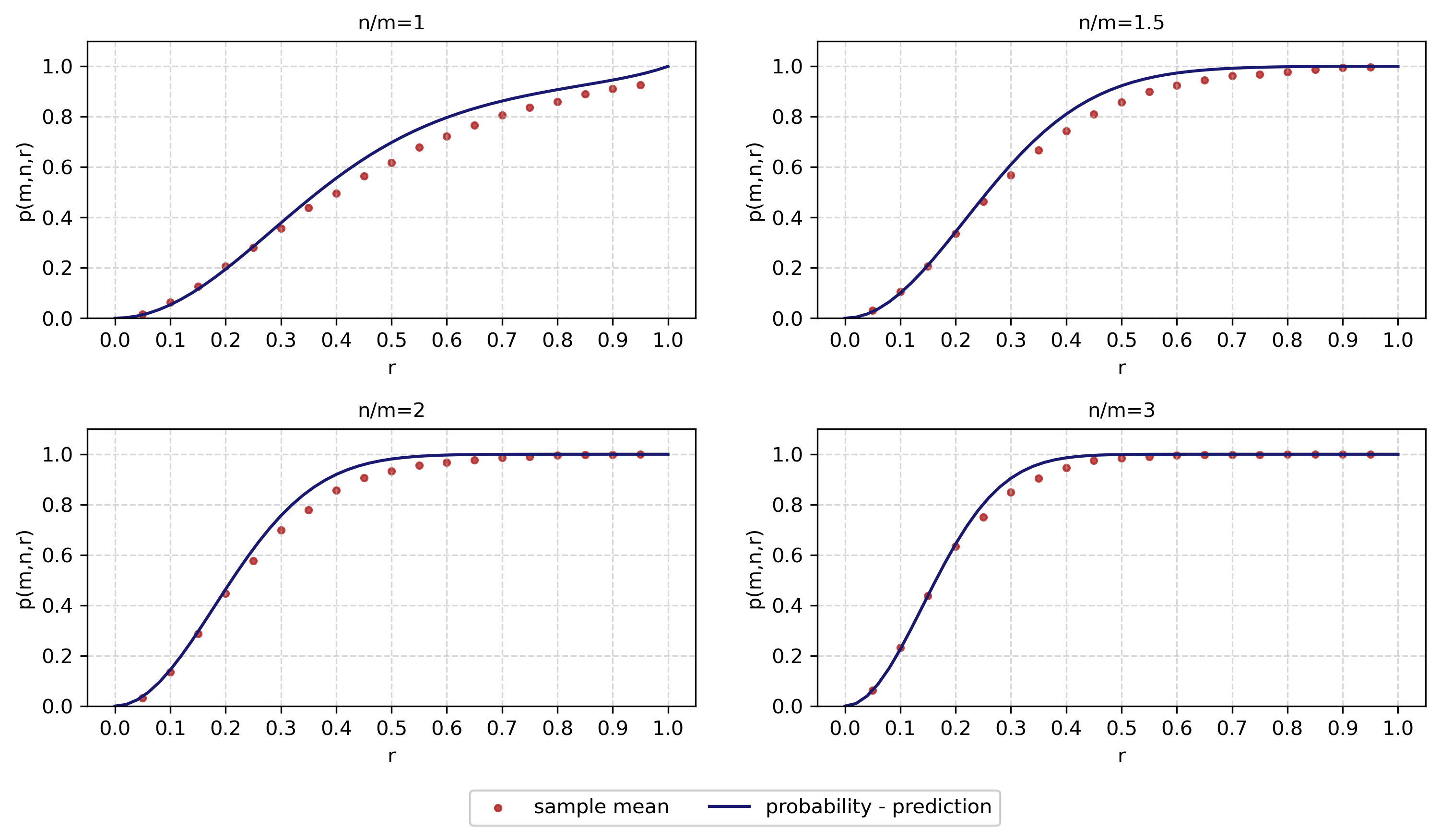}
        \caption{Matching Probability.}
        \label{fig:prob_homo}
    \end{subfigure}
    %\hfill
    %\bigskip
    \begin{subfigure}{\textwidth}
        \centering
        \includegraphics[width=\textwidth]{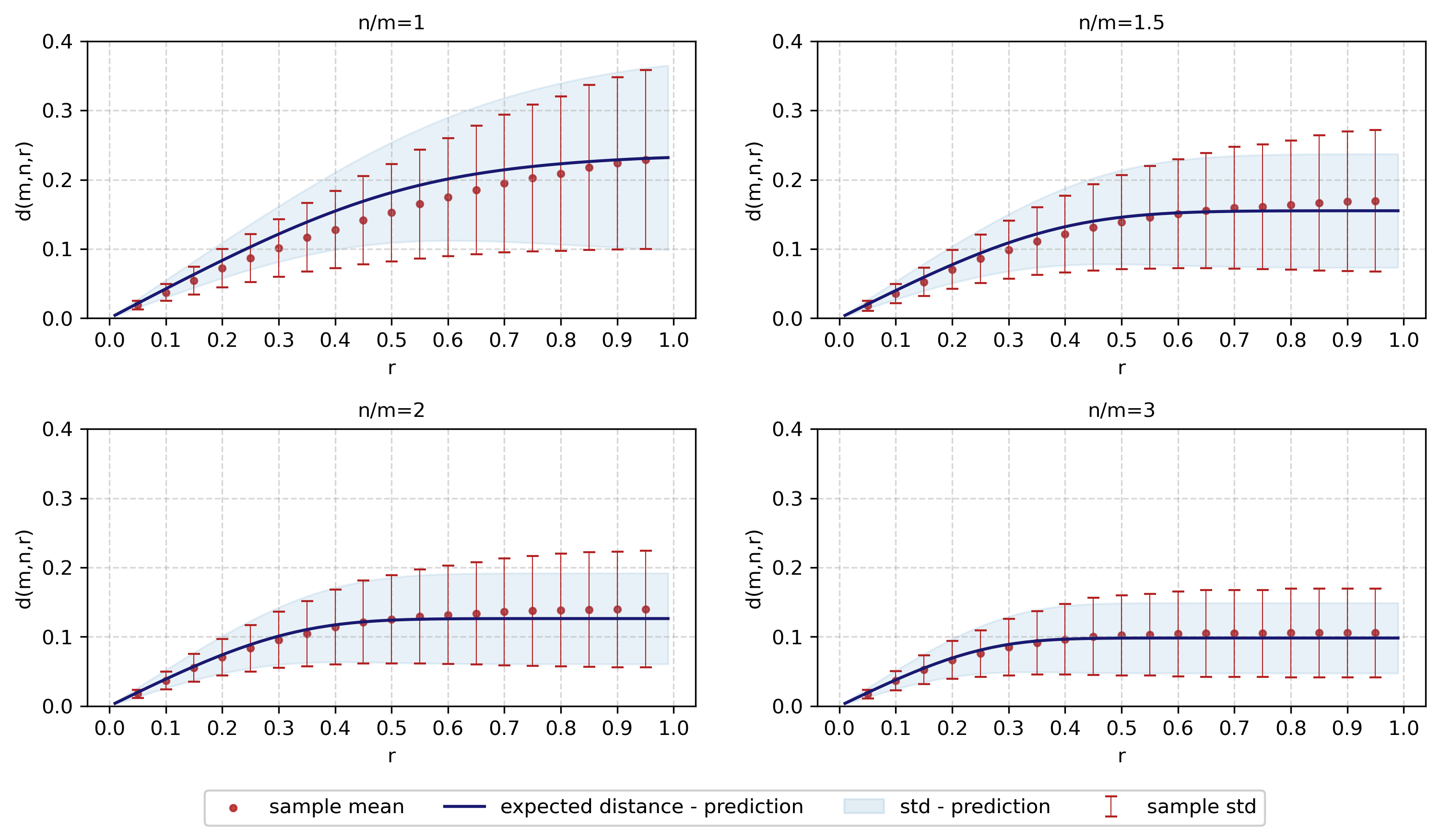}
        \caption{Matching Distance.}
        \label{fig:dist_homo}
    \end{subfigure}
    \caption{Verification of Formulas under Varying Values of Maximum Matching Radius.}
    \label{fig:RBMP_radius}
\end{figure}
%\end{comment}

Figure \ref{fig:RBMP_radius} presents a comparison between the Monte Carlo simulation results and the analytical predictions. The estimates of the matching probability $p(\chi)$ and average matching distance $d(\chi)$, as given by Equations \eqref{eq:p_mnr} and \eqref{eq:d_mnr}, are shown as the solid lines in Figures \ref{fig:prob_homo} and \ref{fig:dist_homo}, respectively. The corresponding sample means obtained from the simulated RBMP instances are represented by the circle markers.
As illustrated, the analytical predictions closely align with the simulation results across all parameter combinations. The average relative errors in the estimates of $p(\chi)$ for $\frac{n}{m} \in \{1, 1.5, 2, 3\}$ are 7.71\%, 4.99\%, 4.16\%, and 1.72\%, respectively. For $d(\chi)$, the corresponding average relative errors are 11.43\%, 6.85\%, 6.10\%, and 5.47\%.

Specifically, we also illustrate the variation in matching distance across realized RBMP instances. The red error bars in Figure \ref{fig:dist_homo} represent the sample standard deviations obtained from the simulations, while the light blue shaded regions correspond to the standard deviations predicted by the analytical formula, computed as the square root of the variance given in Equation \eqref{eq:Var_X}.
These results demonstrate that the proposed probability and distance formulas can provide very accurate estimations. 

In addition, Figure \ref{fig:RBMP_radius} shows how the maximum matching radius $r$ influences both the matching probability $p(\chi)$ and the average matching distance $d(\chi)$ under various supply-to-demand ratios. As shown in Figure \ref{fig:prob_homo}, when $\frac{n}{m} = 1$, the matching probability $p(\chi)$ starts to decline as soon as $r$ drops slightly below 1. In contrast, in more unbalanced scenarios, $p(\chi)$ remains close to 1 and is largely unaffected until $r$ falls below a certain threshold. For instance, when $\frac{n}{m} = 3$, this threshold is approximately 0.5. 
Similar patterns can also be observed for $d(\chi)$. 
These observations can likely be explained by the convergence property identified in \citet{shen_zhai_ouyang_2024}: in highly unbalanced scenarios, competition among demand vertices is minimal, and most demand vertices are matched to their nearest supply vertices. As a result, even when a maximum matching radius is imposed, as long as the maximum distance still exceeds the nearest-neighbor distance, both the matching probability and the expected matching distance remain largely unaffected. In contrast, in more balanced scenarios, this convergence property does not hold. Competition among demand vertices becomes significant, which introduces negative correlation among matches, and some of the demand points will have to be matched to more distant supply vertices. Therefore, when a maximum matching radius is imposed in these scenarios, even a slight reduction in the radius can lead to noticeable decreases in both the matching probability and the expected matching distance.

\subsection{Verification of formulas for static and heterogeneous RBMPs}
We next verify the proposed formulas for static heterogeneous RBMPs. Consider the region of analysis is in a two-dimensional Euclidean space composed of a set $Z$ of $5 \times 5$ unit-volume hexagonal zones (i.e., $V_z = 1, \forall z\in Z$). 
We examine two types of spatial heterogeneity in demand and supply distributions that can be commonly observed in real-world mobility services: uniform and mono-centric.
In the uniform scenario, the number of demand vertices in each zone $m_z, \forall z\in Z$ is randomly generated from a uniform distribution $\text{U}\left((1-\delta)\hat{m}, (1+\delta)\hat{m})\right)$. Here $\hat{m}$ represents a baseline number of demand vertices distributed in all zones, and $\delta \in [0,1]$ controls the degree of heterogeneity; i.e., $\delta =0$ represents a homogeneous case where the number of demand vertices equals $\hat{m}$ in all zones.
In the mono-centric scenario, the number of demand vertices per zone reaches the highest at the center of the region and gradually decreases toward the boundaries: $m_z = (1-\delta)\hat{m} + 2\delta \hat{m}(1-d_z)$.
Here $d_z \in [0,1]$ is the normalized distance from zone $z$'s center to the center of the region, which is the ratio of the absolute distance to the maximum zone-to-zone distance across the region. 

\begin{figure}[h!]
    \centering
    \includegraphics[width=\textwidth]{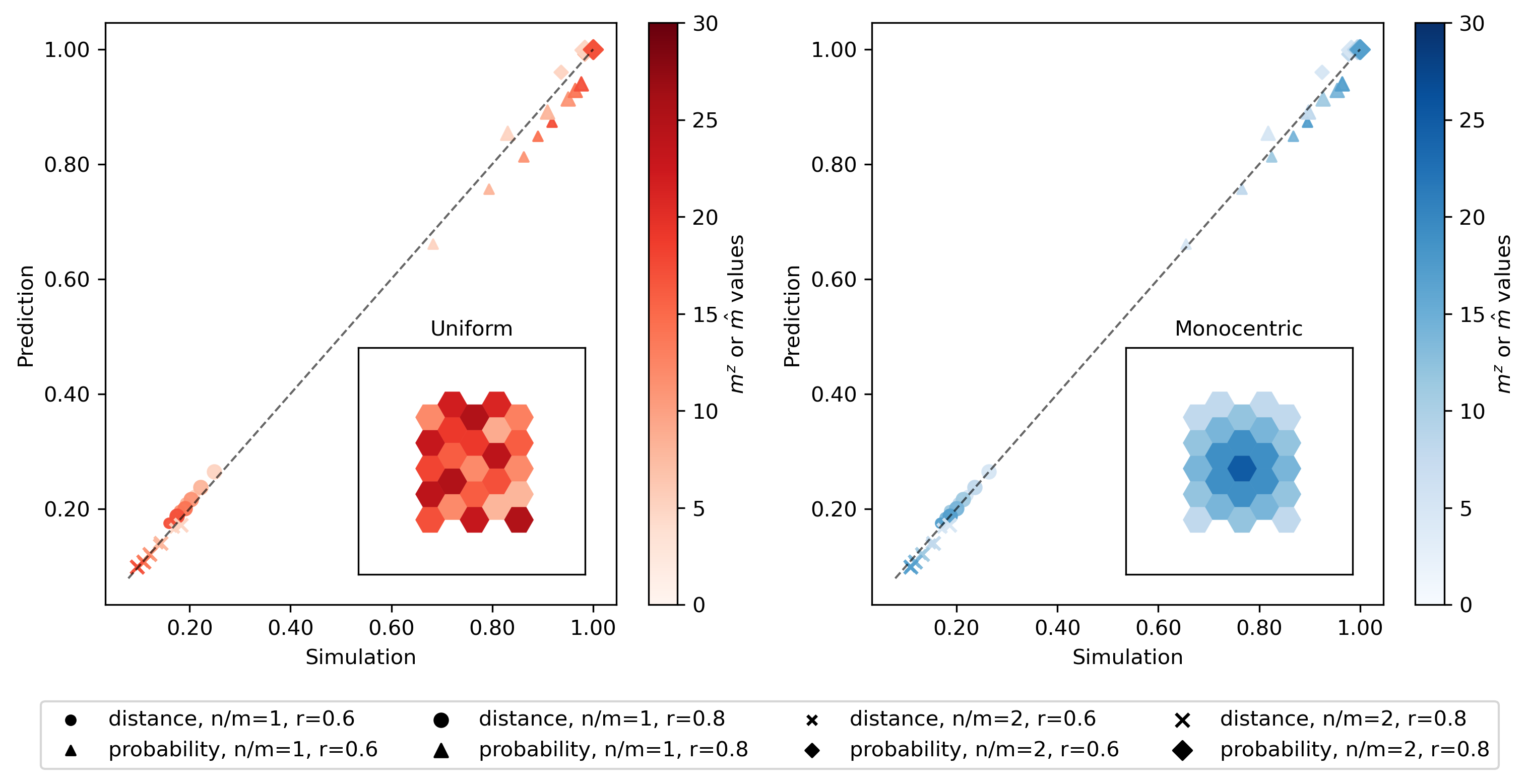}
    \caption{Verification of Formulas under Spatial Heterogeneity.}
    \label{fig:dist and rate}
\end{figure}

For each heterogeneity scenario, we consider two supply-to-demand ratios $n_z/m_z = n/m \in \{1, 2\}$, two levels of maximum matching radius $r_z = r \in \{0.6, 0.8\}$ across all zones, and a range of baseline demand values $\hat{m} \in \{3, 6, 9, 12, 15\}$.
To introduce a sufficient level of heterogeneity in the spatial vertex distribution, we set $\delta = 0.5$ for all cases. 
Then, for each realization of the vertex and radius distributions across all zones, 
we obtain a collective parameter profile $\bm{\chi}$.
Given each profile, 100 heterogeneous RBMP instances are generated. % with random vertex locations. 
Each instance is solved using the commercial solver Gurobi, and the sample means of the matching probability and expected matching distance are recorded.
The corresponding analytical predictions, $\bar{p}(\bm{\chi})$ and $\bar{d}(\bm{\chi})$, are computed using Equation \eqref{eq:P and D}.

Figure \ref{fig:dist and rate} plots the comparison between the simulation results and the analytical predictions. 
For each heterogeneity scenario, a sample realization of the demand distribution profile is shown (similar to the example illustrated in Figure \ref{fig:RBMP_heter_profile}).
Different marker shapes denote different combinations of $n/m$ and $r$ values, while their colors indicate the corresponding values of $\hat{m}$.
As shown, the simulated and predicted results from all cases align closely around the 45-degree line, indicating that Equation \eqref{eq:P and D} provides highly accurate estimations of the simulated results across different heterogeneity patterns.

\subsection{Dynamic matching strategy for ST-RBMP}
Finally, we are ready to showcase the application of ST-RBMP in designing dynamic matching strategies in a mobility service system. To begin, we illustrate how the combination of pooling interval and matching radius affects the matching cost in a dynamic yet homogeneous ST-RBMP at a single time step. 
To this end, we consider a one-time decision on the pooling interval $\tau(0)$ and matching radius $r(0)$ needs to be made at time $t = 0$ for a single zone in $Z = \{1\}$ with unit area size (i.e., $V_1 = 1$). The initial demand density is set as $m_1(0) = 10$, and we evaluate four different supply-to-demand ratios: $\frac{n}{m} \in \{1, 1.5, 2, 3\}$. Demand and supply vertices arrive continuously over time from independent Poisson processes, with respective constant arrival rates $\lambda_1(t) = 10$ and $\mu_1(t) = 20$ for $t \in [0, 5]$.

\begin{figure}[h] 
    \centering        
    \includegraphics[width=1.0\textwidth]{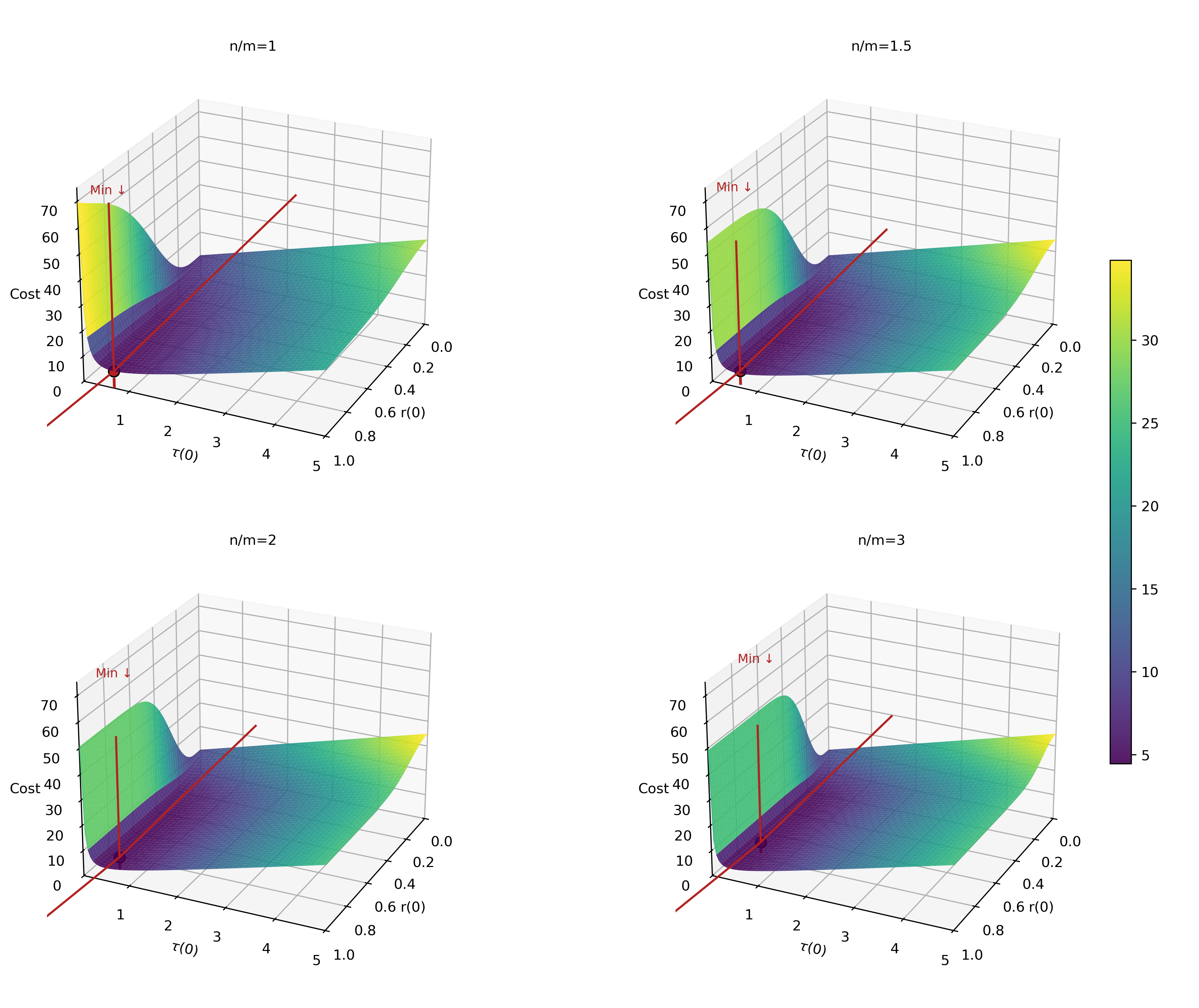}
    \caption{Pooling Interval and Matching Radius at One Single Time Step.}
    \label{fig:dynamic_single_step}
\end{figure}

Figure \ref{fig:dynamic_single_step} plots the surface of the cost function defined in Equation \eqref{eq:cost_t_i} with respect to both the pooling interval $\tau(0) \in [1/\lambda_1(0), 5]$ and the matching radius $r(0) \in [0, 1]$. The optimal combination of $\tau^*(0)$ and $r^*(0)$ that minimizes the cost is marked by the cross arrows on each surface plot.
From the figure, we observe that $\tau^*(0)$ and $r^*(0)$ vary across different supply-to-demand scenarios. 
%, which highlights the importance of tailoring the matching strategy given the location-dependent features. 
Specifically, as $\frac{n}{m}$ increases, both $\tau^*(0)$ and $r^*(0)$ decrease. 
This indicates that, in a system currently with balanced demand and supply, a longer pooling interval and a sufficiently large matching radius could be beneficial. In contrast, in a system with a greater imbalance, a shorter pooling interval and a smaller matching radius becomes preferable. 
%helps ensure that matches are found within an acceptable range. 

These insights help us to better understand the optimal dynamic matching strategies identified in ST-RBMP under spatiotemporal heterogeneity.
Consider four zones with $Z=\{1,2,3,4\}, V_z = 1, \forall z\in Z$, and a planning horizon $[0,5]$ [tu] (i.e., $\mathcal{T}=5$).
At $t=0$, the four zones each have an equal density of demand and supply vertices: $m_1(0)=n_1(0)=3$, $m_2(0)=n_2(0)=5$, $m_3(0)=n_3(0)=10$, $m_4(0)=n_4(0)=20$. 
Over time, we consider three representative demand and supply arrival patterns. 
%, as illustrated in Figure \ref{fig:dynamic_scenario}. 
%For each scenario, the demand rates $\lambda_z(t)$ and supply rates $\mu_z(t)$ are represented by the solid and dashed lines in the two subplots, respectively, with different colors corresponding to different zones.
In the first scenario, the arrival rates of both demand and supply vertices are set as equal and constant over time: $\lambda_z(t) = 2, \mu_z(t) = 2, \forall z \in Z, t\in[0, \mathcal{T}]$. This represents a closed-loop system as discussed in \citet{shen_zhai_ouyang_2024}, in which the service operator manages a fixed fleet of vehicles, and the system reaches equilibrium with equal demand and supply arrival rates. 
In the second scenario, the arrival rates remain constant over time; however, the supply rate exceeds the demand rate: $\lambda_z(t) = 2, \mu_z(t) = 4, \forall z \in Z, t\in[0, \mathcal{T}]$. This represents an open-loop system, where vehicles can enter or exit the system freely, and more new vehicles are expected to enter the system over time. 
The third scenario introduces time-varying demand and supply arrival rates across different zones: $\lambda_z(t) = 2 + z + t, \mu_z(t) = 4 + z + 2t, \forall z \in Z, t\in[0, \mathcal{T}]$, which represents a more time-varying and heterogeneous vertex distribution profile. 

For all three scenarios, we solve the optimal control trajectories $\mathbf{u}^*(t), \forall t \in [0, \mathcal{T}]$, using the dynamic control framework described in Section \ref{sec:solution}. The resulting $\mathbf{u}^*(t)$ and the corresponding evolution of system states $\mathbf{x}(t), \forall t \in [0, \mathcal{T}]$, are illustrated in Figure \ref{fig:pre_result}. In each scenario, the demand and supply densities $m_z(t)$ and $n_z(t)$ in $\mathbf{x}(t)$ are shown as the solid lines with square markers and dashed lines with cross markers, respectively, with different colors indicating different zones. The optimal trajectories of $r_z^*(t)$ in $\mathbf{u}^*(t)$ are plotted as the dotted lines with the corresponding zone colors, and the pooling interval $\tau^*(t)$, shared across all zones, is represented by the black dash-dotted line.

\begin{figure}[h!]
    \centering
    \begin{subfigure}{1\textwidth}
        \centering
        \includegraphics[width=\textwidth]{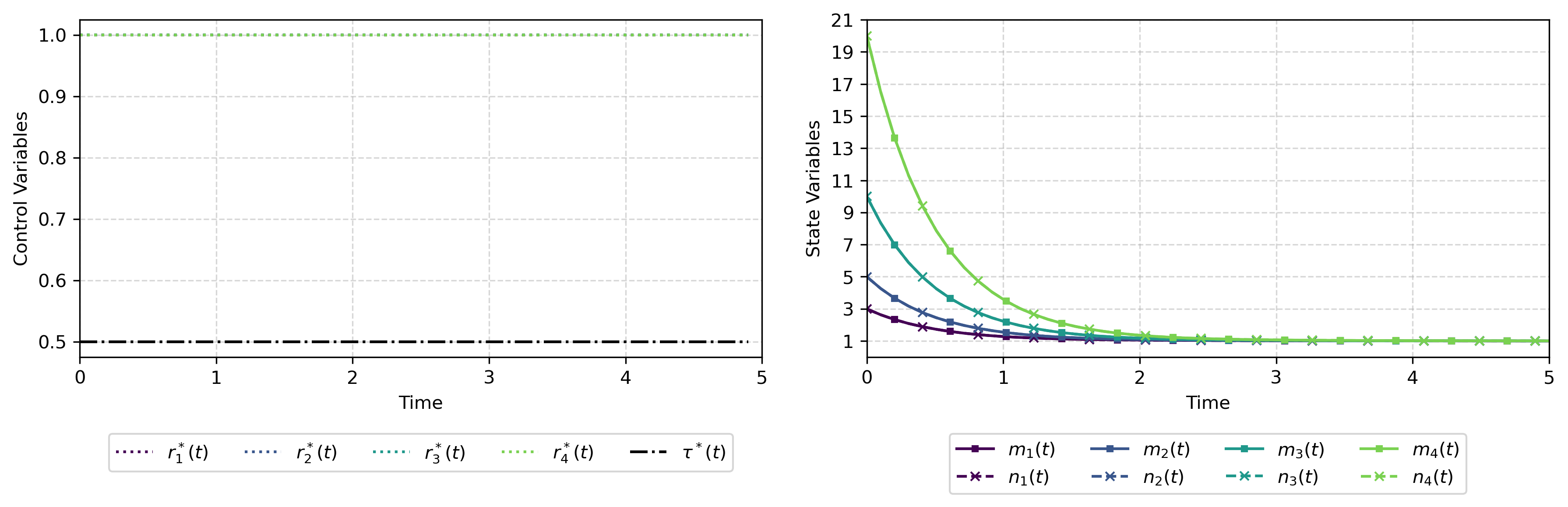}
        \caption{Scenario 1.}
    \end{subfigure}%
    \vfill
    \bigskip
    \begin{subfigure}{1\textwidth}
        \centering
        \includegraphics[width=\textwidth]{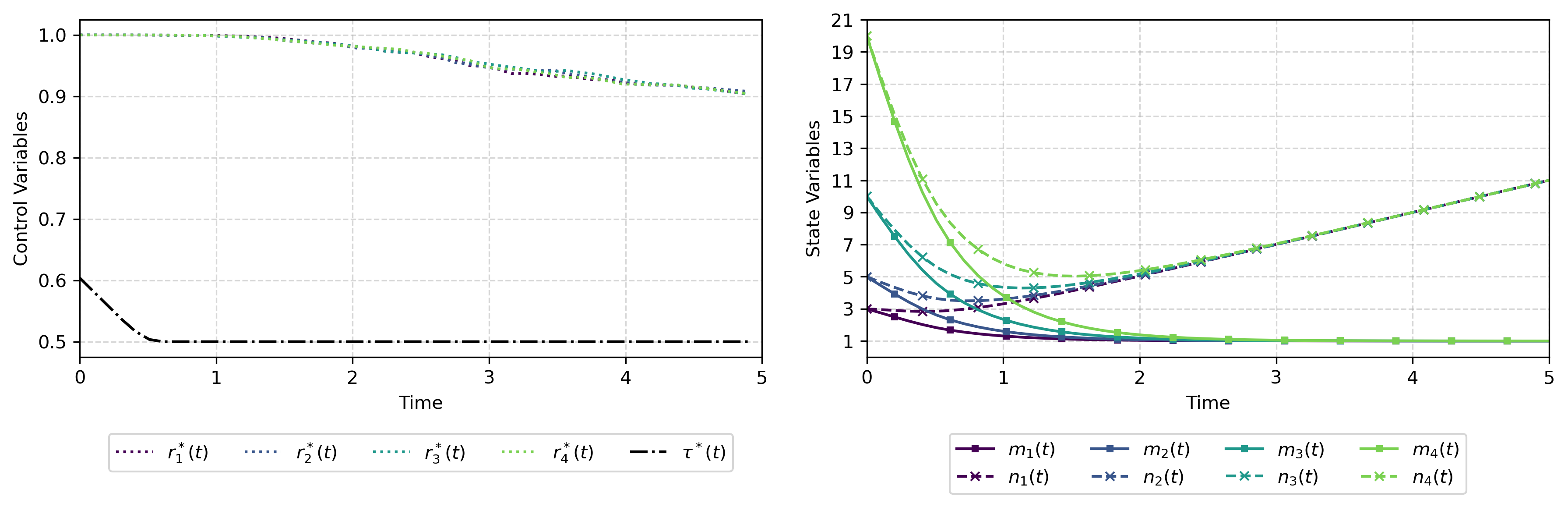}
        \caption{Scenario 2.}
    \end{subfigure}%
    \vfill
    \bigskip
    \begin{subfigure}{1\textwidth}
        \centering
        \includegraphics[width=\textwidth]{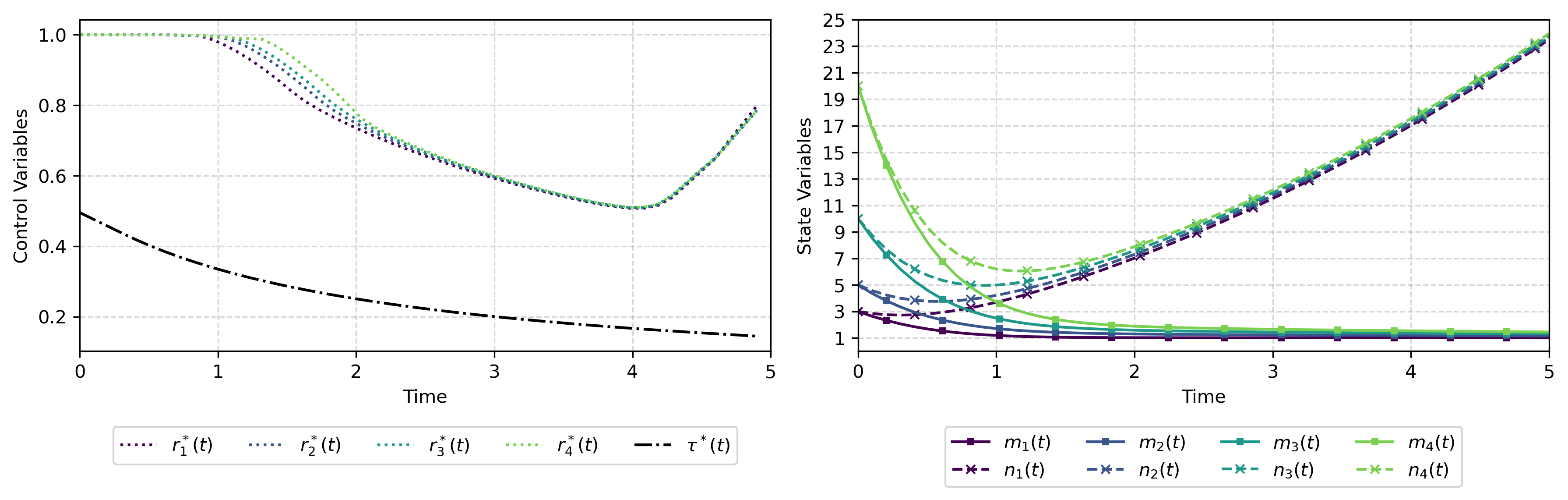}
        \caption{Scenario 3.}
    \end{subfigure}
    \caption{Dynamic Pooling Interval and Matching Radius.}
    \label{fig:pre_result}
\end{figure}

From Figure \ref{fig:pre_result}, we observe $\tau^*(t)$ and $r_z^*(t)$ show quite different patterns over time across different supply-to-demand scenarios. 
Specifically, in the first scenario, 
$\tau^*(t)$ and $r_z^*(t)$ remain constant over the entire planning horizon: $\tau^*(t) = 1/\lambda_z(t) = 0.5, r_z^*(t)=1, \forall z\in Z, t\in [0,5]$. This indicates that instantaneous matching without matching radius is already the optimal strategy, which is consistent with the findings in \citet{shen_zhai_ouyang_2024} for a closed-loop system. 
In the second scenario, we observe that, at the beginning of the planning horizon, $\tau^*(t)$ exceeds $1/\lambda_z(t) = 0.5$, indicating that a longer pooling interval is beneficial. Then, as system evolves, $\tau^*(t)$ gradually decreases to 0.5, suggesting that instantaneous matching becomes optimal.
In addition, $r_z^*(t)$ across all zones follow a similar trend: they remain at 1 at the beginning of the planning horizon and then gradually decreases as the system evolves. This is consistent with the findings in Figure \ref{fig:dynamic_single_step}, when the system becomes more unbalanced, it is beneficial to use shorter pooling interval and smaller matching radius.
In the third scenario, $\tau^*(t)$ constantly decreases over time. Meanwhile, $r_z^*(t)$ across all zones show more fluctuation: they start at $1$ at the beginning, decrease during the middle, and begin to increase toward the end of the planning horizon. 
The reduction of $r_z^*(t)$ in the middle is likely due to the same reason as in the second scenario, since the system becomes more unbalanced during that period of time. 
The increase of $r_z^*(t)$ at the end may be attributed to the penalty cost defined in Equation \eqref{eq:penality}, which penalizes leftover vertices at the end of the horizon. Since the third scenario accumulates most numbers of demand and supply vertices near the end of the planning horizon, it is beneficial to increase the matching probability then to most efficiently clear up the leftover vertices. 
%This also highlights the importance of tailoring the matching strategy to different competing service objectives. 

In addition, in the latter two (unbalanced) scenarios, we observe a consistent decrease in unmatched demand density over the planning horizon, %and this is most pronounced for the latter two scenarios. Meanwhile, 
while the unmatched supply densities %show different rates of change, the separation between them and the demand densities 
keeps increasing. % toward the end of the the planning horizon. 
For the first (balanced) scenario, since the rates of change for both demand and supply are equal, supply and demand densities largely follow the same decreasing trend.
This suggests that the optimal dynamic matching strategy seeks to reduce the demand densities while maximizing the supply-to-demand ratio across all zones over time. This observation is also consistent with the findings in static RBMPs, where a higher supply-to-demand ratio leads to shorter matching distances and higher matching probabilities, ultimately resulting in a lower overall system cost in ST-RBMP.

Finally, in a real-world system, stochasticity in demand and supply arrivals can cause significant variations in the realized problem instances and their solutions. For example, as shown in Figure \ref{fig:dist_homo}, the variation in matching distances is non-negligible.
This type of variation could also cause the random start of WGC, when the system jumps to a different equilibrium state at a certain time. 
To address this, the proposed dynamic matching strategies can be embedded into a closed-loop control framework (i.e., with a sequence of planning horizons). At the start of each planning horizon, the current system state and randomness in demand and supply realizations can be re-evaluated into new system state estimates, and the matching strategy in the next planning horizon can be re-optimized accordingly.

\section{Conclusion}
\label{sec:conclusion}

This paper proposes a new modeling framework to address ST-RBMP under spatiotemporal heterogeneity. 
It begins by analyzing static RBMPs under maximum matching radii and/or spatially heterogeneous vertex distributions.
New closed-form formulas are proposed for estimating the expected matching probability and distance for heterogeneous RBMPs. 
They are derived based on a desirable scaling property identified in homogeneous RBMPs: when the numbers of demand and supply vertices are not (nearly) balanced, the expected matching distance becomes largely independent of the size of the matching region but rather depends primarily on local vertex densities. 
These properties are verified by a series of Monte Carlo simulations, % are conducted to verify the identified scaling property and demonstrate that
and the proposed formulas are shown to provide highly accurate estimates across a wide range of problem settings, including varying values of supply-to-demand ratios, matching radii, and spatial heterogeneity patterns.

The analytical formulas developed for static RBMPs are integrated into the formulation of ST-RBMP in the context of shared mobility services, which dynamically determines the key hyper-parameters for real-time matching, including optimal pooling intervals and maximum matching radii. 
It is formulated as an optimal control problem within a continuum approximation scheme, with the objective to minimize the system-wide costs experienced by both vehicles and customers.
The values of the control variables over time and space are solved from the optimality conditions. 
A series of numerical experiments is conducted to verify the effectiveness of the proposed modeling framework under various demand and supply patterns. 
The results offer theoretical understanding on how dynamic pooling intervals and maximum matching radii impact system costs, and provide valuable managerial insights for mobility service operators in designing their matching strategies. 
In a closed-loop system operating with a fixed fleet of vehicles, instantaneous matching without imposing a matching radius is often optimal. While in an open-loop system, the matching strategies can become more dynamic. 
For example, when the system currently has many leftover passengers and more vehicles are expected to arrive, a longer pooling interval could be beneficial. As the system evolves and the supply-to-demand ratio increases, shorter pooling intervals and smaller matching radii become more favorable. 

This modeling framework effectively captures the spatiotemporal heterogeneity in demand and supply distributions, making it applicable to a wide range of other real-world location-based service systems involving on-demand delivery and resource allocation. Compared to data-driven methods, our analytical model-based approach is more insightful, transferable, and robust across diverse application scenarios.

%% Future Research
The model can be further improved in several directions.
For instance, it currently assumes that demand is always less than or equal to supply ($m \le n$). While this assumption is reasonable for static and homogeneous RBMPs in many application contexts, it may be violated by dynamic and %can be readily adapted by setting the demand density to $\min(m, n)$ and the supply density to $\max(m, n)$. 
heterogeneous RBMPs when supply or demand vertices display strong spatial clustering pattern. As such, future work should relax this assumption, possibly by an extension of the model in Section \ref{sec:RBMP_spatial_heter} which considers a higher level of matching that clears the excessive demand and supply vertices across different zones. 
Moreover, the solution approach based on PMP can be refined to enhance the solution quality. 
For example, additional optimality conditions, such as the Legendre-Clebsch condition, can be examined by evaluating the second-order derivative of the Hamiltonian with respect to the control variables. 
Finally, another interesting direction for future research is to integrate the hyper-parameters determined by the ST-RBMP (i.e., the matching radius and pooling interval) with other tactical-level operational strategies, such as dynamic vehicle routing, empty vehicle repositioning, and vehicle swapping among customers.

%\section*{Acknowledgement}
%Lorem ipsum dolor sit amet, consectetur adipiscing elit.

\newpage
\bibliographystyle{apalike} 
\bibliography{Ref, Dynamic_matching}

\begin{thebibliography}{}

\bibitem[Afeche et~al., 2018]{afeche_ride-hailing_2018}
Afeche, P., Liu, Z., and Maglaras, C. (2018).
\newblock Ride-{Hailing} {Networks} with {Strategic} {Drivers}: {The} {Impact} of {Platform} {Control} {Capabilities} on {Performance}.
\newblock {\em SSRN Electronic Journal}.

\bibitem[Alonso-Mora et~al., 2017]{alonso-mora_-demand_2017}
Alonso-Mora, J., Samaranayake, S., Wallar, A., Frazzoli, E., and Rus, D. (2017).
\newblock On-demand high-capacity ride-sharing via dynamic trip-vehicle assignment.
\newblock {\em Proceedings of the National Academy of Sciences}, 114(3):462--467.
\newblock Publisher: Proceedings of the National Academy of Sciences.

\bibitem[Arnott, 1996]{arnott_taxi_1996}
Arnott, R. (1996).
\newblock Taxi {Travel} {Should} {Be} {Subsidized}.
\newblock {\em Journal of Urban Economics}, 40(3):316--333.

\bibitem[Betts, 1998]{betts_survey_1998}
Betts, J.~T. (1998).
\newblock Survey of {Numerical} {Methods} for {Trajectory} {Optimization}.
\newblock {\em Journal of Guidance, Control, and Dynamics}, 21(2):193--207.
\newblock Publisher: American Institute of Aeronautics and Astronautics.

\bibitem[Caracciolo et~al., 2014]{caracciolo_scaling_2014}
Caracciolo, S., Lucibello, C., Parisi, G., and Sicuro, G. (2014).
\newblock Scaling hypothesis for the {Euclidean} bipartite matching problem.
\newblock {\em Physical Review E}, 90(1):012118.

\bibitem[Castillo et~al., 2017]{castillo_surge_2017}
Castillo, J.~C., Knoepfle, D., and Weyl, G. (2017).
\newblock Surge {Pricing} {Solves} the {Wild} {Goose} {Chase}.
\newblock In {\em Proceedings of the 2017 {ACM} {Conference} on {Economics} and {Computation}}, pages 241--242, Cambridge Massachusetts USA. ACM.

\bibitem[Daganzo, 2010]{daganzo_public_2010}
Daganzo, C.~F. (2010).
\newblock Public {Transportation} {Systems}:{Basic} {Principles} of {System} {Design},{Operations} {Planning} and {Real}-{TimeControl}.

\bibitem[Feng and Niazadeh, 2020]{feng_batching_2020}
Feng, Y. and Niazadeh, R. (2020).
\newblock Batching and {Optimal} {Multi}-stage {Bipartite} {Allocations}.
\newblock {\em SSRN Electronic Journal}.

\bibitem[Hyland and Mahmassani, 2018]{hyland_dynamic_2018}
Hyland, M. and Mahmassani, H.~S. (2018).
\newblock Dynamic autonomous vehicle fleet operations: {Optimization}-based strategies to assign {AVs} to immediate traveler demand requests.
\newblock {\em Transportation Research Part C: Emerging Technologies}, 92:278--297.
\newblock Publisher: Elsevier BV.

\bibitem[Ke et~al., 2021]{ke_system_2021}
Ke, J., Wu, G., Xu, Z., Yang, H., Yin, Y., and Ye, J. (2021).
\newblock System and method for determining passenger-seeking ride-sourcing vehicle navigation.

\bibitem[Lei et~al., 2019]{lei_path-based_2019}
Lei, C., Jiang, Z., and Ouyang, Y. (2019).
\newblock Path-based {Dynamic} {Pricing} for {Vehicle} {Allocation} in {Ridesharing} {Systems} with {Fully} {Compliant} {Drivers}.
\newblock {\em Transportation Research Procedia}, 38:77--97.

\bibitem[Lenhart and Workman, 2007]{lenhart_optimal_2007}
Lenhart, S. and Workman, J.~T. (2007).
\newblock {\em Optimal {Control} {Applied} to {Biological} {Models}}.
\newblock Chapman and Hall/CRC, New York.

\bibitem[Liang et~al., 2023]{liang_enhancing_2023}
Liang, Y., Li, D., Zhao, J., Ding, X., Lian, H., Hao, J., and He, R. (2023).
\newblock Enhancing {Dynamic} {On}-demand {Food} {Order} {Dispatching} via {Future}-informed and {Spatial}-temporal {Extended} {Decisions}.
\newblock In {\em Proceedings of the 32nd {ACM} {International} {Conference} on {Information} and {Knowledge} {Management}}, pages 4702--4708, Birmingham United Kingdom. ACM.

\bibitem[Maciejewski et~al., 2016]{maciejewski_assignment-based_2016}
Maciejewski, M., Bischoff, J., and Nagel, K. (2016).
\newblock An {Assignment}-{Based} {Approach} to {Efficient} {Real}-{Time} {City}-{Scale} {Taxi} {Dispatching}.
\newblock {\em IEEE Intelligent Systems}, 31(1):68--77.
\newblock Publisher: Institute of Electrical and Electronics Engineers (IEEE).

\bibitem[McAsey et~al., 2012]{mcasey_convergence_2012}
McAsey, M., Mou, L., and Han, W. (2012).
\newblock Convergence of the forward-backward sweep method in optimal control.
\newblock {\em Computational Optimization and Applications}, 53(1):207--226.

\bibitem[Mehta, 2013]{mehta_online_2013}
Mehta, A. (2013).
\newblock Online {Matching} and {Ad} {Allocation}.
\newblock {\em Foundations and Trends® in Theoretical Computer Science}, 8(4):265--368.

\bibitem[Najmi et~al., 2017]{najmi_novel_2017}
Najmi, A., Rey, D., and Rashidi, T.~H. (2017).
\newblock Novel dynamic formulations for real-time ride-sharing systems.
\newblock {\em Transportation Research Part E: Logistics and Transportation Review}, 108:122--140.

\bibitem[Ouyang and Yang, 2023]{ouyang_measurement_2023}
Ouyang, Y. and Yang, H. (2023).
\newblock Measurement and mitigation of the “wild goose chase” phenomenon in taxi services.
\newblock {\em Transportation Research Part B: Methodological}, 167:217--234.

\bibitem[Passenberg, 2012]{passenberg_theory_2012}
Passenberg, B. (2012).
\newblock {\em Theory and algorithms for indirect methods in optimal control of hybrid systems}.
\newblock {PhD} {Thesis}, Citeseer.

\bibitem[Psaraftis et~al., 2016]{psaraftis_dynamic_2016}
Psaraftis, H.~N., Wen, M., and Kontovas, C.~A. (2016).
\newblock Dynamic vehicle routing problems: {Three} decades and counting.
\newblock {\em Networks}, 67(1):3--31.

\bibitem[Qin et~al., 2021]{qin_optimizing_2021}
Qin, G., Luo, Q., Yin, Y., Sun, J., and Ye, J. (2021).
\newblock Optimizing matching time intervals for ride-hailing services using reinforcement learning.
\newblock {\em Transportation Research Part C: Emerging Technologies}, 129:103239.

\bibitem[Rao et~al., 2020]{rao_surf_2020}
Rao, G., Choudhury, S., Lingras, P., Savage, D., and Mago, V. (2020).
\newblock {SURF}: identifying and allocating resources during {Out}-of-{Hospital} {Cardiac} {Arrest}.
\newblock {\em BMC Medical Informatics and Decision Making}, 20(S11):313.

\bibitem[Shanks and Jacobson, 2022]{shanks_online_2022}
Shanks, M. and Jacobson, S.~H. (2022).
\newblock Online total bipartite matching problem.
\newblock {\em Optimization Letters}, 16(5):1411--1426.

\bibitem[Shanks et~al., 2023]{shanks_approximation_2023}
Shanks, M., Yu, G., and Jacobson, S.~H. (2023).
\newblock Approximation algorithms for stochastic online matching with reusable resources.
\newblock {\em Mathematical Methods of Operations Research}, 98(1):43--56.

\bibitem[Shen and Ouyang, 2023]{shen_dynamic_2023}
Shen, S. and Ouyang, Y. (2023).
\newblock Dynamic and {Pareto}-improving swapping of vehicles to enhance integrated and modular mobility services.
\newblock {\em Transportation Research Part C: Emerging Technologies}, 157:104366.

\bibitem[Shen et~al., 2021]{shen_path-based_2021}
Shen, S., Ouyang, Y., Ren, S., and Zhao, L. (2021).
\newblock Path-{Based} {Dynamic} {Vehicle} {Dispatch} {Strategy} for {Demand} {Responsive} {Transit} {Systems}.
\newblock {\em Transportation Research Record: Journal of the Transportation Research Board}, 2675(10):948--959.

\bibitem[Shen et~al., 2024]{shen_zhai_ouyang_2024}
Shen, S., Zhai, Y., and Ouyang, Y. (2024).
\newblock Expected bipartite matching distance in a $d$-dimensional $l^p$ space: Approximate closed-form formulas and applications to mobility services.
\newblock \textit{https://arxiv.org/abs/2406.12174}.

\bibitem[Wu et~al., 2022]{wu_graph_2022}
Wu, S., Sun, F., Zhang, W., Xie, X., and Cui, B. (2022).
\newblock Graph {Neural} {Networks} in {Recommender} {Systems}: {A} {Survey}.
\newblock {\em ACM Computing Surveys}, 55(5):97:1--97:37.

\bibitem[Xu et~al., 2020]{xu_supply_2020}
Xu, Z., Yin, Y., and Ye, J. (2020).
\newblock On the supply curve of ride-hailing systems.
\newblock {\em Transportation Research Part B: Methodological}, 132:29--43.

\bibitem[Yang and Gonzales, 2017]{yang_modeling_2017}
Yang, C. and Gonzales, E.~J. (2017).
\newblock Modeling {Taxi} {Demand} and {Supply} in {New} {York} {City} {Using} {Large}-{Scale} {Taxi} {GPS} {Data}.
\newblock In Thakuriah, P.~V., Tilahun, N., and Zellner, M., editors, {\em Seeing {Cities} {Through} {Big} {Data}: {Research}, {Methods} and {Applications} in {Urban} {Informatics}}, pages 405--425. Springer International Publishing, Cham.

\bibitem[Yang et~al., 2020]{yang_optimizing_2020}
Yang, H., Qin, X., Ke, J., and Ye, J. (2020).
\newblock Optimizing matching time interval and matching radius in on-demand ride-sourcing markets.
\newblock {\em Transportation Research Part B: Methodological}, 131:84--105.

\bibitem[Zha et~al., 2016]{zha_economic_2016}
Zha, L., Yin, Y., and Yang, H. (2016).
\newblock Economic analysis of ride-sourcing markets.
\newblock {\em Transportation Research Part C: Emerging Technologies}, 71:249--266.

\bibitem[Zhai et~al., 2024]{zhai_average_2024}
Zhai, Y., Shen, S., and Ouyang, Y. (2024).
\newblock Average {Distance} of {Random} {Bipartite} {Matching} in {Discrete} {Networks}.
\newblock arXiv:2409.18292 [math].

\bibitem[Zhou et~al., 2007]{zhou_bipartite_2007}
Zhou, T., Ren, J., Medo, M., and Zhang, Y. (2007).
\newblock Bipartite network projection and personal recommendation.
\newblock {\em Physical Review E}, 76(4):046115.

\end{thebibliography}

\newpage
\appendix
%\begin{appendices}
\section{Monotonicity of $\left( \frac{1}{x} - 1 \right) \operatorname{Li}_{-\frac{1}{D}}(x)$} 
\label{app:f_i}
\begin{lemma} \label{lemma:f_i}
    For any $s \in [-1, 0)$, function $\left( \frac{1}{x} - 1 \right) \operatorname{Li}_s(x)$ is strictly increasing on $x\in[0,1)$.
\end{lemma}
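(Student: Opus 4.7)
The plan is to show that the function admits a power-series representation on $[0,1)$ whose coefficients are all non-negative, with strictly positive coefficients beyond the constant term. Using the defining series $\operatorname{Li}_s(x) = \sum_{k=1}^{\infty} x^k / k^s$ valid for $|x|<1$, I would multiply by $\tfrac{1-x}{x}$ and split into two series, then reindex the first one by $k \mapsto k+1$ to align powers of $x$. This yields
\[
\left(\tfrac{1}{x}-1\right)\operatorname{Li}_s(x) \;=\; 1 \;+\; \sum_{k=1}^{\infty}\!\left[\tfrac{1}{(k+1)^{s}} - \tfrac{1}{k^{s}}\right] x^{k}.
\]

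Next I would check the sign of each coefficient. Since $s \in [-1,0)$ implies $-s \in (0,1]$, the map $k \mapsto k^{-s}$ is strictly increasing on the positive integers, so $(k+1)^{-s} - k^{-s} > 0$ for every $k \geq 1$. Hence all coefficients in the series above are strictly positive. Term-by-term differentiation (justified inside the radius of convergence) gives a derivative that is a power series with strictly positive coefficients on $(0,1)$, so the function is strictly increasing on $(0,1)$; the value at $x=0$ (taken as the right-hand limit, which equals $1$) then extends monotonicity to $[0,1)$.

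The only technical point worth attention is convergence in the boundary case $s=-1$, where the bracketed coefficients equal $(k+1)-k=1$ and therefore do not decay; however, the resulting series is merely a geometric-type series and converges absolutely for $|x|<1$. As a sanity check, for $s=-1$ one obtains $\left(\tfrac{1}{x}-1\right)\operatorname{Li}_{-1}(x) = \tfrac{1-x}{x}\cdot\tfrac{x}{(1-x)^2} = \tfrac{1}{1-x}$, manifestly strictly increasing on $[0,1)$. For $s\in(-1,0)$, the coefficients behave like $-s\cdot k^{-s-1}$ by the mean value theorem, which decays to $0$, so convergence on $[0,1)$ is immediate. I expect no serious obstacle; the whole argument hinges on the elementary observation that subtracting $(1-x)\operatorname{Li}_s(x)/x$ from itself after reindexing turns the monotonicity of $k \mapsto k^{-s}$ directly into positivity of Taylor coefficients.
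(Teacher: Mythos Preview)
Your proof is correct and takes a genuinely different route from the paper. The paper computes the derivative explicitly via the identity $\frac{d}{dx}\operatorname{Li}_s(x)=\frac{1}{x}\operatorname{Li}_{s-1}(x)$, reduces the sign question to showing $g(x,s)=(1-x)\operatorname{Li}_{s-1}(x)-\operatorname{Li}_s(x)>0$, expands $g$ as a series with $s$-dependent terms, and then argues by comparison in $s$ against the explicit closed forms at $s=0$ and $s=-1$. You instead manipulate the power series of the function itself: the reindexing $\sum_{k\ge 1} k^{-s}x^{k-1}-\sum_{k\ge 1} k^{-s}x^{k}=1+\sum_{k\ge 1}[(k+1)^{-s}-k^{-s}]x^{k}$ converts monotonicity of $k\mapsto k^{-s}$ directly into positivity of all Taylor coefficients beyond the constant. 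Your argument is more elementary---no closed forms for $\operatorname{Li}_{0},\operatorname{Li}_{-1},\operatorname{Li}_{-2}$ are needed and no interpolation in $s$ is required---and it incidentally yields more than strict monotonicity, since every derivative of the function is then a power series with positive coefficients on $(0,1)$. The paper's route, by contrast, is organized around the polylog differentiation recurrence, which fits naturally if one is already working with such identities; it also makes the boundary case $s=0$ (where monotonicity just fails) visible as the place where $g$ vanishes identically.
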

\begin{proof}
To prove that the smooth function $\left( \frac{1}{x} - 1 \right) \operatorname{Li}_s(x)$ is strictly increasing on the interval $x \in [0, 1)$, it is sufficient to show that its derivative with respect to $x$ is positive. Since $\frac{d}{dx} \operatorname{Li}_s(x) = \frac{1}{x}\operatorname{Li}_{s-1}(x)$, the derivative is 
\begin{align*}
\frac{d}{dx} \left( (\frac{1}{x} - 1) \operatorname{Li}_s(x) \right)
= \left( -\frac{1}{x^2} \right) \operatorname{Li}_s(x) + \left( \frac{1 - x}{x} \right) \cdot \frac{d}{dx} \operatorname{Li}_s(x)  = \frac{1}{x^2} \left[ (1 - x) \operatorname{Li}_{s-1}(x) - \operatorname{Li}_s(x) \right].
\end{align*}
As such, the sign of the derivative is determined by the following function:
\begin{align*}
    g(x,s) = (1 - x) \operatorname{Li}_{s-1}(x) - \operatorname{Li}_s(x) = 
    \sum_{k=1}^\infty x^k \left( \frac{(1-x)k-1}{k^s} \right).
\end{align*}
It is easy to see that, for any $x$, every term inside the summation is monotonically decreasing with respect to $s$ over the interval $s \in [-1, 0]$. As such, the overall summation $g(x,s)$ takes its maximum at $s=-1$ and minimum at $s=0$. It is known that for specific integer values of $s \in \{0, -1, -2\}$, the poly-logarithm function can be simplified: $\operatorname{Li}_{0}(x)=\frac{x}{1-x}$, $\operatorname{Li}_{-1}(x)=\frac{x}{(1-x)^2}$, $\operatorname{Li}_{-2}(x)=\frac{x(1+x)}{(1-x)^3}$.
Hence, it is easy to verify that $g(x,0)=0$ and $g(x,-1)>0$.
Therefore, $g(x, s) > 0$ for all $s \in [-1, 0)$, which implies that the derivative of $\left( \frac{1}{x} - 1 \right) \operatorname{Li}_s(x)$ is strictly positive on $x \in [0, 1)$.
\begin{comment}
Let $a_k = x^k \left( \frac{(1-x)k-1}{k^s} \right)$, the above summation can be partitioned into two parts:
\begin{align*}
\sum_{k=1}^{\lfloor \frac{1}{1 - x} \rfloor} a^k + \sum_{k=\lceil \frac{1}{1 - x} \rceil}^{\infty} a^k,
\end{align*}
where the first part contains only the negative terms and the second part positive terms.

We next show the positive part is always greater than or equal to the absolute value of the negative part for $x\in[0,1)$. 
For the negative part, given $x^k<1$, and $1-(1-x)k<1$, its absolute value is bounded by the following.
\begin{align*}
\Bigg| \sum_{k=1}^{\lfloor \frac{1}{1 - x} \rfloor} a^k \Bigg| < x\sum_{k=1}^{\lfloor \frac{1}{1 - x} \rfloor} \frac{1}{k^s} 
\le x\int_0^{ \frac{1}{1 - x} }\frac{1}{k^s} \text{ d}k = \frac{x}{1-s}\left( \frac{1}{1 - x} \right)^{1-s}.
\end{align*}
\end{comment}
\end{proof}

The following figure numerically plots the monotonic shape of function
$\left( \frac{1}{x} - 1 \right) \operatorname{Li}_{-\frac{1}{D}}(x)$
for $s = -\frac{1}{D}$, where spatial dimension $D \in \{1, 2, 3, 4\}$. As shown, the function value remains close of 1 for small values of $x<0.5$ across all $D$ values. Notably, for $D > 1$, its value stays close to $0$ when $x < 0.8$, and only begins to increase significantly as $x$ approaches $1$. 
\begin{figure}[H] 
    \centering        
    \includegraphics[width=0.85\textwidth]{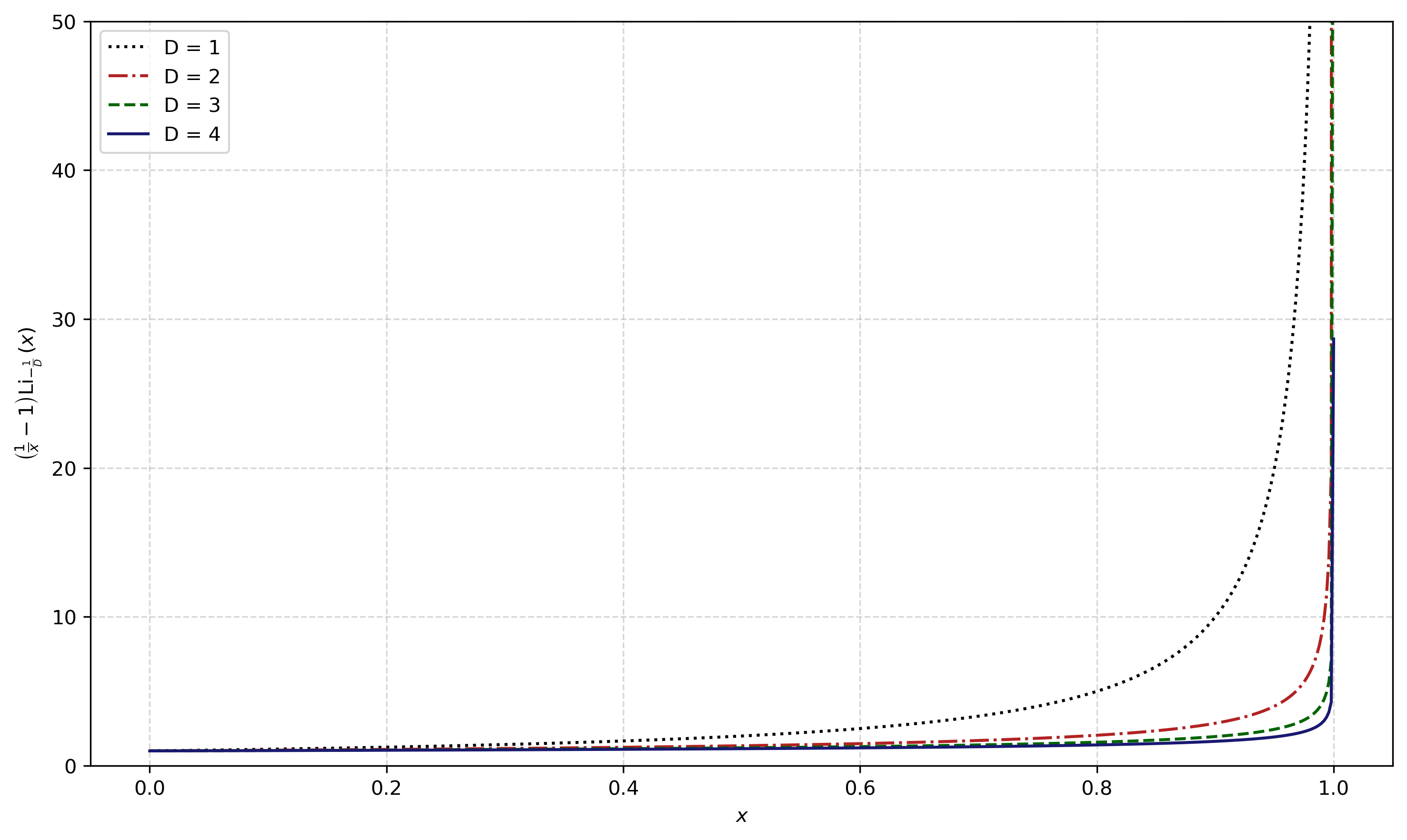}
    \caption{Monotonicity of $\left( \frac{1}{x} - 1 \right) \operatorname{Li}_{-\frac{1}{D}}(x)$.}
    \label{fig:f_x}
\end{figure}

%\end{appendices}

\end{document}